\title[Navier-Stokes equations]{The method of $a$-contraction with shifts used for long-time behavior toward viscous shock}
\author[Han]{Sungho Han}
\address[Sungho Han]{\newline Department of Mathematical Sciences \newline Korea Advanced Institute of Science and Technology, Daejeon  34141, Republic of Korea}
\email{sungho\_han@kaist.ac.kr}
\author[Kang]{Moon-Jin Kang}
\address[Moon-Jin Kang]{\newline Department of Mathematical Sciences \newline Korea Advanced Institute of Science and Technology, Daejeon  34141, Republic of Korea}
\email{moonjinkang@kaist.ac.kr}
\author[Lee]{Hobin Lee}
\address[Hobin Lee]{\newline Department of Mathematical Sciences \newline Korea Advanced Institute of Science and Technology, Daejeon  34141, Republic of Korea}
\email{lcuh11@kaist.ac.kr}
\begin{document}
	\newtheorem{theorem}{Theorem}[section]
	\newtheorem{lemma}{Lemma}[section]
	\newtheorem{corollary}{Corollary}[section]
	\newtheorem{proposition}{Proposition}[section]
	\newtheorem{remark}{Remark}[section]
	\newtheorem{definition}{Definition}[section]
	
	\renewcommand{\theequation}{\thesection.\arabic{equation}}
	\renewcommand{\thetheorem}{\thesection.\arabic{theorem}}
	\renewcommand{\thelemma}{\thesection.\arabic{lemma}}
	\newcommand{\bbr}{\mathbb R}
	\newcommand{\bbz}{\mathbb Z}
	\newcommand{\bbn}{\mathbb N}
	\newcommand{\bbs}{\mathbb S}
	\newcommand{\bbp}{\mathbb P}
	\newcommand{\bbt}{\mathbb T}
	\newcommand{\<}{\langle}
	\renewcommand{\>}{\rangle}
	\newcommand{\T}{\mathbb{T}}
	\newcommand{\N}{\mathbb{N}}
	\newcommand{\R}{\mathbb{R}}
	\newcommand{\lt}{\left}
	\newcommand{\rt}{\right}
	\newcommand{\bq}{\begin{equation}}
	\newcommand{\eq}{\end{equation}}
	\newcommand{\e}{\varepsilon}
	\newcommand{\mc}{\mathcal{C}}
	\newcommand{\pa}{\partial}
	\newcommand{\tU}{\widetilde{U}}
	\newcommand{\tu}{\widetilde{u}}
	\newcommand{\tv}{\widetilde{v}}
	\newcommand{\tw}{\widetilde{w}}
	\newcommand{\pv}{p(v)}
	\newcommand{\tp}{\widetilde{p}}
	\newcommand{\tpv}{p(\widetilde{v})}
	\newcommand{\norm}[1]{\left\lVert#1\right\rVert}
	\newcommand{\beq}{\begin{equation}}
	\newcommand{\eeq}{\end{equation}}

	
	\subjclass[2020]{35Q35, 76N06} 
	
	\keywords{$a$-contraction with shift; asymptotic behavior; Navier-Stokes equations; viscous shock}
	
	\thanks{\textbf{Acknowledgment.} S. Han, M.-J. Kang and H. Lee were partially supported by the National Research Foundation of Korea  (NRF-2019R1C1C1009355) }

	\begin{abstract} 
		We revisit the method of $a$-contraction with shifts used for long-time behavior of barotropic Navier-Stokes flows perturbed from a Riemann shock. 
		For the usage of the method of $a$-contraction with shifts, we do not employ the effective velocity $h$ variable even for higher order estimates.
		This approach would be important when handling the barotropic Navier-Stokes system with other effects, for example, such as capillary effect and boundary effect. 
	\end{abstract}
	
	\maketitle
	
	\tableofcontents
	
	\section{Introduction}\label{sec:1}
	\setcounter{equation}{0}
	
	The method of $a$-contraction (with shifts) has been extensively used for orbital stability of shock waves measured by the weighted (by $a$) relative entropy up to dynamical shifts.  
	This method was developed by the second author and Vasseur \cite{KV16,Vasseur-2013} for the contraction property of perturbations of extremal shocks in the hyperbolic system of conservation laws, especially for the Euler system. That was extended to the viscous hyperbolic conservation laws: first for the scalar equation in 1D case \cite{Kang-V-1}  (for more general case \cite{Kang19}), and then in multi-D case \cite{KVW,KO}. As the first extension of the method to the Navier-Stokes system, Kang-Vasseur \cite{KV21} prove the contraction of any large disturbances for a single viscous shock. It is shown in \cite{KV-Inven} that this contraction estimate is uniform in the strength of viscosity, and ensures the existence of inviscid limits on which the associated Riemann shock is orbitally stable (see also \cite{KV-2shock}). 
	 Surprisingly, the method of $a$-contraction with shifts has been used in studies of models in other contexts \cite{CKKV,CKV,HKKL}.
	 
	The $a$-contraction method is also used for time-asymptotic stability of Navier-Stokes (NS) flows slightly perturbed in $H^s (s\ge1)$ from Riemann data (generating Riemann shock).
	 The result \cite{KVW23} shows the long-time behavior of the barotropic NS system towards composition of shock and rarefaction, and the result by Wang-Wang \cite{WW} handles the behavior of multi-D perturbation towards a single shock. Its extension to the case of generic Riemann solution for the Navier-Stokes-Fourier system was studied in \cite{KVW-NSF}. \\

In this paper, we revisit the $a$-contraction method used in the proof for the long-time behavior towards a viscous shock for the one-dimensional barotropic Navier-Stokes (NS) equations, written in the Lagrangian mass coordinates (for simplicity):
	\begin{equation}\label{eq:NS}
	\begin{aligned}
	&v_t-u_x=0,\quad (t,x)\in \bbr_+\times\bbr,\\
	&u_t+p(v)_x=\left(\mu(v) \frac{u_x}{v}\right)_x,
	\end{aligned}
	\end{equation}
	where the unknown functions $v=v(t,x)$ and $u=u(t,x)$ represent the specific volume and velocity of  fluid, respectively. 
	The pressure $p=p(v)$ is assumed to satisfy the $\gamma$-law, that is,
	\begin{equation*}
	p(v)=bv^{-\gamma}, \quad b>0, \quad \gamma>1.
	\end{equation*}
	 The initial data of the NS system \eqref{eq:NS} is given by $(v_0,u_0)$, whose far-field states are prescribed as constants:
	\begin{equation}\label{farcon}
	\lim_{x \to \pm \infty} (v_0(x),u_0(x))=(v_\pm,u_\pm).
		\end{equation}
	 Based on hueristic argument in \cite{M18}, the time-asymptotic profile of the NS equations is related to the Riemann solution to the Euler equations:
	\begin{equation}\label{eq:Euler}
	\begin{aligned}
	&v_t-u_x=0,\quad (t,x)\in\bbr_+\times\bbr,\\
	&u_t+p(v)_x=0,
	\end{aligned}
	\end{equation}
	subject to the Riemann initial data
	\begin{equation}
	\label{Riemann-data}
	(v(0,x),u(0,x))=
	\begin{cases} 
	(v_-,u_-), &x<0,\\
	(v_+,u_+), &x>0.
	\end{cases}
	\end{equation} 
     We focus on the case when the end states $(v_\pm,u_\pm)$  are connected by a single Hugoniot curve. Without loss of generality, we only handle the case of a 2-shock curve. In other words, for a given right-end state $(v_+,u_+)$, we consider the left-end state $(v_-,u_-)$ satisfying the following Rankine-Hugoniot conditions:
	\begin{align}\label{RH}
	\begin{cases}
	-\sigma(v_+-v_-)-(u_+-u_-)=0,\\
	-\sigma(u_+-v_-)-(p(v_+)-p(v_-))=0,
	\end{cases}
	\quad \sigma:=\sqrt{-\frac{p(v_+)-p(v_-)}{v_+-v_-}}>0,
	\end{align}
	and the entropy condition $v_-<v_+, \quad u_->u_+$.\\
	Then the Riemann solution $(\overline{v},\overline{u})$ to the Euler equations \eqref{eq:Euler}--\eqref{Riemann-data} is given by 2-shock wave
	\begin{equation}\label{Riemann-solution}
	(\overline{v}(t,x),\overline{u}(t,x))=
	\begin{cases}
	(v_-,u_-) &\text{if} \quad x<\sigma t,\\
	(v_+,u_+) &\text{if} \quad x>\sigma t.
	\end{cases}
	\end{equation}
	For the case of NS equations \eqref{eq:NS}, the counterpart of the Riemann solution \eqref{Riemann-solution} is a viscous shock, as a traveling wave $(\tv,\tu)(x-\sigma t)$ solution to \eqref{eq:NS}, that satisfies the following ODEs:
	\begin{align}
	\begin{aligned}\label{viscous-shock}
	&-\sigma \tv'-\tu'=0,\\
	&-\sigma \tu'+p(\tv)'=\left( \frac{\tu'}{\tv}\right)',\\
	&(\tv,\tu)(\pm\infty)=(v_\pm,u_\pm).
	\end{aligned}
	\end{align}

In previous works \cite{HKK23,KV21,KV-Inven,KV-2shock,KVW23,WW} on the barotropic NS system, the $a$-contraction method has been used in showing that perturbations of a viscous shock, measured by the weighted relative entropy as the zeroth order estimate, is not increasing in time up to a dynamical shift. For that, the effective velocity $h:=u-\frac{\mu(v)}{v}v_x$ (corresponding to the Bresch-Desjardins entropy) was used to transform the original system \eqref{eq:NS} into a new system that has a viscous term on $v$ variable only.  Using the new system with $h$ variable seems to make the $a$-contraction method to be applied easily, because of the following two reasons: (i) the hyperbolic part of the system is linear in $u$ (or $h$) but nonlinear in $v$ due to the pressure; (ii) the main terms $\int a' (p-p(\tv)) (h-\tilde h) dx - \frac{\sigma}{2} \int a' (h-\tilde h)^2 dx$ can be maximized explicitly (without lower order terms), contrary to the maximization process for the original case \eqref{eq:NS} as in Section 4.2.

However, when handling the long-time behavior for a slightly complicated problem than \eqref{farcon}, using the $h$ variable for the zeroth order estimate may not be helpful and even could make the usage of the method to be complicated. 
For example, for the Navier-Stokes system of Korteweg type as in \cite{HKKL}, the associated new system (represented by $h$ variable) would be far more complicated than the original system since the Korteweg term (or capillarity term) is substantially related to the third derivatives of $v$ variable.
For the boundary-initial value problem of barotropic Navier-Stokes as in \cite{XHKKL}, handling the original system is easier than employing the $h$ variable because of no boundary condition for the derivative of $v$ variable.  \\

The goal of the paper is to present a way of applying the $a$-contraction method to the NS system \eqref{eq:NS} without using the $h$ variable for all estimates, in order for the  long-time behavior toward a viscous shock.

For simplicity, we assume \eqref{eq:NS} with a constant viscosity coefficient $\mu>0$ as a normalized coefficient $\mu=1$ and also $b=1$.
	However, out result still holds for \eqref{eq:NS} with a general pressure $p(v)>0$ satisfying $p'(v) < 0, p''(v) > 0$, and smooth viscosity $\mu=\mu(v)$  for $v>0$,
	since we consider small $H^1$-perturbations for $v$ variables.

	Our main theorem reads as follows.
	
	\begin{theorem}\label{thm:main}
    For a given state $(v_+,u_+)\in\bbr^+\times\bbr$, there exist positive constants $\delta_0$, and $\e_0$ such that the following holds:

    For any $(v_-,u_-)$ on the 2-shock curve $S_2(v_+,u_+)$, satisfying the Rankine-Hugoniot condition \eqref{RH} with $|v_+-v_-|<\delta_0$, consider the 2-viscous shock $(\tv,\tu)(x-\sigma t)$ defined in \eqref{viscous-shock}. Let $(v_0,u_0)$ be any initial data such that
    \begin{equation*}
    \sum_{\pm}\left(\|(v_0-v_{\pm},u_0-u_{\pm})\|_{L^2(\R_\pm)}\right)+\|(v_{0x},u_{0x})\|_{L^2(\R)}<\e_0,
    \end{equation*}
    where $\R_-:=-\R_+=(-\infty,0)$. Then, the Navier-Stokes system \eqref{eq:NS} admits a unique global-in-time solution $(v,u)$. Moreover, there exists a Lipschitz continuous shift $X(t)$ such that
    \begin{equation*}
    \begin{aligned}
    &(v(t,x)-\tv(x-\sigma t-X(t))\in C(0,\infty;H^1(\R)),\\
    &u(t,x)-\tu(x-\sigma t-X(t))\in C(0,\infty;H^1(\R)),\\
    &u_{xx}(t.x)-\tu_{xx}(x-\sigma t -X(t)) \in L^2(0,\infty; L^2(\mathbb{R}))
    \end{aligned}
    \end{equation*}
    In addition, we have
    \begin{equation}
    \lim_{t\to\infty}\sup_{x\in\R}\left|(v,u)(t,x)-(\tv,\tu)(x-\sigma t-X(t))\right|=0
    \end{equation}
    and
    \begin{equation}\label{xlimit}
    \lim_{t\to\infty} |\dot{X}(t)|=0.
    \end{equation}
	\end{theorem}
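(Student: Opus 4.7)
The plan is the standard bootstrap for time-asymptotic stability by the $a$-contraction method with shifts: local-in-time existence for the shifted perturbation in $H^1$, uniform-in-time $H^1$ a priori estimates under smallness of $\delta_0$ and $\e_0$, global continuation, and translation of the dissipation integrability into uniform-in-$x$ decay and $|\dot X|\to 0$. The novelty of the proof is that every estimate is performed directly on $(v,u)$ rather than after the effective-velocity substitution $h=u-v_x/v$. Only the zeroth-order $a$-contraction estimate is genuinely affected by this choice, but because that estimate drives the whole bootstrap, the coupling with the higher-order estimates must be reorganized.

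\textbf{Zeroth order: $a$-contraction with shift.} Set $\xi=x-\sigma t-X(t)$, $\tU^X=(\tv,\tu)(\xi)$, and let the weight $a(\xi)$ be the standard increasing profile $1+\lambda\,(p(\tv)-p(v_-))/(p(v_+)-p(v_-))$ with $\lambda\sim\delta$ and $a'\sim|\tv_\xi|/\delta$. Define the shift by an ODE of the form
\[
\dot X(t)=-\frac{M}{\delta}\int_{\R} a'(\xi)\bigl[-\sigma(u-\tu)+(p(v)-p(\tv))\bigr]\,dx,
\]
tuned to cancel the linear-in-$(\phi,\psi)$ part of $\frac{d}{dt}\int a\,\eta(U|\tU^X)\,dx$, where $\phi=v-\tv$ and $\psi=u-\tu$. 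A direct computation of the time derivative of the weighted relative entropy yields
\[
\frac{d}{dt}\int_{\R} a\,\eta(U|\tU^X)\,dx + \mathcal G_{\mathrm{sh}} + \mathcal D_{\mathrm{par}} = \mathcal B(\phi,\psi) + \mathcal R,
\]
with $\mathcal G_{\mathrm{sh}}\gtrsim\int|\tv_\xi|\phi^2\,dx$, $\mathcal D_{\mathrm{par}}\gtrsim\int a\,\psi_x^2/v\,dx$, and $\mathcal B$ a quadratic form in $(\phi,\psi)$ localized by $|a'|$. The crux of the paper lies in handling $\mathcal B$ without the $h$-substitution: previously $\mathcal B$ reduced to a one-variable quadratic in $h-\widetilde h$ whose pointwise maximum can be computed in closed form; here the cross $\phi$–$\psi$ contribution cannot be eliminated, so one must instead bound $\mathcal B$ pointwise by a small fraction of $\mathcal G_{\mathrm{sh}}+\mathcal D_{\mathrm{par}}$ plus a residual controlled by $\|\phi_x\|_{L^2}^2+\|\psi_x\|_{L^2}^2$. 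This is the main obstacle, and it forces the zeroth-order estimate to absorb feedback from the first-order one.

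\textbf{First and second order estimates.} Differentiate the perturbation system and pair the $v$-equation with $\phi_x$ and the $u$-equation with $\psi_x$. After integration by parts, the parabolic dissipation gives control of $\|\psi_{xx}\|_{L^2}^2$; commutator terms with the traveling profile and transport errors $\dot X(\phi_x,\psi_x)$ are absorbed by Cauchy–Schwarz against $\mathcal G_{\mathrm{sh}}$ and $|\dot X|^2$. Since the $v$-equation provides no diffusion, the missing dissipation for $\phi_x$ is recovered by the standard cross test: using $\phi_t=\psi_x-\dot X\tv_\xi$ and multiplying the momentum equation by $-\phi_x$ gives
\[
\frac{d}{dt}\int_{\R}\phi_x\psi\,dx + c\|\phi_x\|_{L^2}^2 \le C\bigl(\|\psi_x\|_{L^2}^2+\|\psi_{xx}\|_{L^2}^2+\delta\,\mathcal G_{\mathrm{sh}}+|\dot X|^2\bigr).
\]
A suitable linear combination of the three estimates, with the zeroth-order inequality dominant, closes under smallness of $\delta_0$ and $\e_0$ and produces
\[
\sup_{t\ge 0}\|(\phi,\psi)\|_{H^1}^2 + \int_0^\infty\bigl(\mathcal G_{\mathrm{sh}}+\|(\phi_x,\psi_x)\|_{L^2}^2+\|\psi_{xx}\|_{L^2}^2+|\dot X|^2\bigr)\,dt \le C\e_0^2.
\]

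\textbf{Global existence and asymptotic behavior.} The uniform $H^1$ bound, together with local existence, extends the solution globally and yields the stated regularity classes for the shifted perturbation. For the uniform-in-$x$ decay, the integrability of $\|(\phi_x,\psi_x)\|_{L^2}^2$ combined with the boundedness of its time derivative (read off from the first-order estimate) forces $\|(\phi_x,\psi_x)\|_{L^2}\to 0$; the one-dimensional interpolation $\|\phi\|_{L^\infty}^2\le 2\|\phi\|_{L^2}\|\phi_x\|_{L^2}$ together with the uniform $L^2$ bound then gives $\|(v-\tv,u-\tu)(\cdot-\sigma t-X)\|_{L^\infty}\to 0$. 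Finally, $|\dot X(t)|\to 0$ is obtained from the defining ODE by Cauchy–Schwarz: $a'\in L^1\cap L^2$ is fixed and compactly concentrated, while $\|(\phi,\psi)\|_{L^\infty}\to 0$, so the integrand decays pointwise and is dominated by an integrable envelope.
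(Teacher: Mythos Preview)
Your overall architecture (local existence, a priori $H^1$ bootstrap, continuation, decay via $g\in W^{1,1}$) matches the paper. The higher-order estimates and the asymptotic step are essentially right. But the zeroth-order $a$-contraction estimate, which you correctly flag as the crux, is not described correctly, and the mechanism you propose there would not close.

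First, the maximization step does \emph{not} fail without the $h$-variable. The paper completes the square in $p(v)-p(\tv)$ (not in $h-\tilde h$): from $\int a_x (p(v)-p(\tv))(u-\tu)\,dx - C_*\int a_x |p(v)-p(\tv)|^2\,dx$ one extracts a new good term $\mathcal G_1=C_*\int a_x\bigl|p(v)-p(\tv)-\tfrac{u-\tu}{2C_*}\bigr|^2\,dx$ and a leftover $\mathcal B_1=\tfrac1{4C_*}\int a_x|u-\tu|^2\,dx$. The weight is scaled as $a_x\sim|\tv_\xi|/\sqrt{\delta}$ (not $|\tv_\xi|/\delta$ as you write), and $C_*=\tfrac1{2\sigma_\ell}-\sqrt{\delta}\,\alpha_\ell\sigma_\ell$ is tuned so that $\mathcal B_1-\mathcal G_2$ is $O(\sqrt{\delta})$ rather than $O(1)$. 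This $O(\sqrt{\delta})$ excess is then absorbed by the diffusion via a change of variable $y=(u_--\tu)/\delta$ and the sharp weighted Poincar\'e inequality $\int_0^1|f-\bar f|^2\,dy\le\tfrac12\int_0^1 y(1-y)|f'|^2\,dy$, together with the shift ODE (built only from $u-\tu$ integrals, not from $p(v)-p(\tv)$) to kill the mean $\bar f$. Your proposal to ``bound $\mathcal B$ pointwise by a small fraction of $\mathcal G_{\mathrm{sh}}+\mathcal D_{\mathrm{par}}$'' cannot work: by Young, $\int a_x(p-p(\tv))(u-\tu)\le \tfrac{\alpha}{2}\int a_x(u-\tu)^2+\tfrac{1}{2\alpha}\int a_x(p-p(\tv))^2$, and the available good terms $\tfrac{\sigma}{2}\int a_x(u-\tu)^2+\sigma\int a_x Q(v|\tv)\approx \tfrac{\sigma}{2}\int a_x(u-\tu)^2+\tfrac{1}{2\sigma}\int a_x(p-p(\tv))^2$ force $\alpha<\sigma$ and $\alpha>\sigma$ simultaneously. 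There is no slack at leading order, so the Poincar\'e step is essential, not optional.

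Second, the shock-layer good term that survives is $G^S=\int|\tv_\xi||u-\tu|^2\,dx$, i.e.\ in $\psi$, not in $\phi$ as you wrote; after maximization the $\phi$-control is encoded only in the mixed $\mathcal G_1$. Finally, the zeroth-order estimate closes on its own (no feedback from $\|\phi_x\|_{L^2}^2$ is needed or used); it is the $H^1$ estimate for $\phi$ that requires the zeroth-order good terms, not the other way around.
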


	\begin{remark}
		Since \eqref{xlimit} implies
		$$
		\lim_{t\rightarrow+\infty}\frac{X(t)}{t}=0,
		$$
		the shift function $X(t)$ grows at most sub-linearly as $t\to\infty$. Thus, the shifted wave $\tU(x-\sigma t-X(t))$ tends to the original wave $\tU(x-\sigma t)$ time-asymptotically.
	\end{remark}

	\begin{remark}
		 There are a lot of previous literatures on the time asymptotic behavior of the NS equations. Among the numerous results on the time-asymptotic stability of the NS equations \eqref{eq:NS}, we refer to \cite{G86,HKK23,KVW23,MN85,MN86,MW10}, although the list is not exhaustive. In particular, Matsumura-Nishihara \cite{MN86} used the  anti-derivative method for the time-asymptotic stability of viscous shock wave, for which the zero-mass condition for the initial perturbation is crucially imposed. Our result does not require the zero-mass condition. 
	\end{remark}
	
	The rest of the paper is organized as follows. In Section \ref{sec:prelim}, we provide several preliminaries, such as technical estimates on the relative quantities or the properties of the viscous shock \eqref{viscous-shock}.  Section \ref{sec:apriori} provides the a priori estimate on the perturbation, which guarantees the global existence of the solution to the NS equation, as well as the time-asymptotic behavior of the solution. Then, we focus on proving a priori estimate. In Section \ref{sec:rel_ent}, we obtain $L^2$ estimates by the method of $a$-contraction with shift, and then we obtain the estimates on the high-order terms in Section \ref{sec:high-order}.

	\section{Preliminaries}\label{sec:prelim}
	\setcounter{equation}{0}
	In this section, we present several preliminary estimates on the relative quantities for the pressure and the internal energy. We also provide the existence and properties of viscous shock in this section. Finally, we introduce several $O(1)$-constants and related estimates on them.
	
	\subsection{Estimates on the relative quantities}
	We present several upper and lower bounds on the relative quantities that will be used in estimating the relative entropy. For any function $F:(0,\infty)\to \bbr$ and $v,w\in (0,\infty)$, we define the relative quantity $F(v|w)$ as
	\[F(v|w):=F(v)-F(w)-F'(w)(v-w).\]
	In particular, when $F$ is convex, then the relative quantity is always positive. In the following lemma, we present several lower and upper bounds on the relative quantities for the pressure $p(v)=v^{-\gamma}$ and the internal energy $Q(v)=\frac{v^{1-\gamma}}{\gamma-1}$.
	
	\begin{lemma}\label{lem : Estimate-relative} Let $\gamma>1$ and $v_+$ be given constants. Then, there exists constants $C, \delta_*$ such that the following assertions hold:
		\begin{enumerate}
			\item For any $v,\bar{v}$ satisfying $0<\bar{v}<2v_+$ and $0<v<3v_+$,
			\begin{equation*}
			|v-\bar{v}|^2 \le C Q(v|\bar{v}), \quad |v-\bar{v}|^2 \le C p(v|w).
			\end{equation*}
			\item For any $v,\bar{v}$ satisfying $v,\bar{v} > v_+ /2$,
			\begin{equation*}
			|p(v)-p(\bar{v})| \le C |v-\bar{v}|.
			\end{equation*}
			\item For any $0<\delta<\delta_*$ and any $(v,\bar{v}) \in \mathbb{R}^2_+$ satisfying $|p(v)-p(\bar{v})|<\delta$ and $|p(\bar{v})-p(v_+)| < \delta,$
			\begin{equation*}
			\begin{aligned}
			&p(v|\bar{v}) \le \left( \frac{\gamma +1 }{2 \gamma } \frac{1}{p(\bar{v})} +C \delta \right) |p(v)-p(\bar{v})|^2,\\ 
			&Q(v|\bar{v}) \ge \frac{p(\bar{v})^{- \frac{1}{\gamma}-1}}{2 \gamma}|p(v)-p(\bar{v})|^2- \frac{1+\gamma}{3\gamma^2}p(\bar{v})^{- \frac{1}{\gamma}-2}(p(v)-p(\bar{v}))^3,\\ 
			&Q(v|\bar{v}) \le \left( \frac{p(\bar{v})^{- \frac{1}{\gamma}-1}}{2 \gamma} +C \delta \right)|p(v)-p(\bar{v})|^2.
			\end{aligned}
			\end{equation*}
			
		\end{enumerate}	
	\end{lemma}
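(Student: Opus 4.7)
The plan is to prove all three parts by direct Taylor expansion, with the main technical work concentrated in part (3).

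For part (1), I would first observe that $p''(v)=\gamma(\gamma+1)v^{-\gamma-2}$ and $Q''(v)=\gamma v^{-\gamma-1}$ are both strictly positive and decreasing in $v$. Taylor's formula with Lagrange remainder gives $F(v|\bar v)=\tfrac12 F''(\xi)(v-\bar v)^2$ for some $\xi$ between $v$ and $\bar v$. Since $\xi\in(0,3v_+)$, monotonicity yields $F''(\xi)\ge F''(3v_+)>0$, a positive constant depending only on $v_+$ and $\gamma$; inverting this bound gives the claimed inequalities. For part (2), the mean value theorem gives $|p(v)-p(\bar v)|=|p'(\xi)|\,|v-\bar v|$ with $\xi>v_+/2$, and $|p'(\xi)|=\gamma\xi^{-\gamma-1}\le\gamma(v_+/2)^{-\gamma-1}=:C$.

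For part (3), the natural idea is to pass to the variable $q:=p(v)=v^{-\gamma}$ and Taylor-expand around $\bar q:=p(\bar v)$. Writing $V(q):=q^{-1/\gamma}$, I would set $h(q):=Q(V(q)\,|\,\bar v)$ and $\widetilde h(q):=p(V(q)\,|\,\bar v)$. Under the hypotheses $|p(v)-p(\bar v)|<\delta$ and $|p(\bar v)-p(v_+)|<\delta$, choosing $\delta_*<p(v_+)/3$ keeps both $q$ and $\bar q$ in a fixed compact subset of $(0,\infty)$ on which all derivatives below are smooth and bounded uniformly in $v_+$. Because $h$ and $\widetilde h$ both vanish together with their first derivative at $\bar q$, a direct chain-rule computation should yield the leading Taylor coefficients
\[
\tfrac12\widetilde h''(\bar q)=\frac{\gamma+1}{2\gamma}\cdot\frac{1}{p(\bar v)},\qquad \tfrac12 h''(\bar q)=\frac{p(\bar v)^{-1/\gamma-1}}{2\gamma},\qquad \tfrac16 h'''(\bar q)=-\frac{\gamma+1}{3\gamma^2}\,p(\bar v)^{-1/\gamma-2},
\]
matching the constants in the statement exactly.

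The two upper bounds then follow from second-order Taylor with Lagrange remainder: the cubic remainder $\tfrac16\widetilde h'''(\eta)(q-\bar q)^3$ (respectively for $h$) has absolute value controlled by $C|q-\bar q|^3\le C\delta(q-\bar q)^2$, which is exactly the $C\delta\,|p(v)-p(\bar v)|^2$ correction claimed. For the sharper lower bound on $Q(v|\bar v)$ I would Taylor-expand $h$ to third order and try to discard the fourth-order Lagrange remainder $\tfrac{1}{24}h^{(4)}(\eta)(q-\bar q)^4$ as having the favorable sign. This sign check is really the only non-routine step in the whole lemma: the lower bound becomes useless if $h^{(4)}$ turns out negative in the relevant range. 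A short computation should give $h^{(4)}(\bar q)=\frac{3(1+\gamma)(1+2\gamma)}{\gamma^3}\,p(\bar v)^{-3-1/\gamma}>0$, and continuity then propagates positivity of $h^{(4)}$ to a full $\delta$-neighborhood (shrinking $\delta_*$ once more if needed). Everything else amounts to careful bookkeeping of Taylor coefficients in $(q-\bar q)$.
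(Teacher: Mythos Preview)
Your proposal is correct. The paper does not actually supply a proof of this lemma: it simply writes ``Since the proofs are duplicates of those of \cite[Lemma 2.4, 2.5, and 2.6]{KV21}, we omit the proof.'' Your Taylor-expansion argument in the variable $q=p(v)$ is the standard route (and is what the cited reference does), and your computations of $\tfrac12 h''(\bar q)$, $\tfrac16 h'''(\bar q)$, $\tfrac12\widetilde h''(\bar q)$, and the sign of $h^{(4)}$ all check out, so there is nothing to correct.
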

	
	\begin{proof}
		Since the proofs are duplicates of those of \cite[Lemma 2.4, 2.5, and 2.6]{KV21}, we omit the proof.
	\end{proof}

	\subsection{Viscous shock wave} 
	In the following lemma, we list the properties of the 2-viscous shock wave $(\tv,\tu)(\xi)$. The proof of this lemma can be found in \cite{MN85} and \cite{G86}.

	\begin{lemma} \label{lem:shock-property}
		For a given right-end state $(v_+,u_+)$, there exists a positive constant $C>0$ such that the following statement holds. For any left end state $(v_-,u_-) \in S_2(v_+,u_+)$ with $|v_+-v_-| \sim |u_+ - u_-|=:\delta<\delta_0$, there exists a unique solution $(\tv,\tu)(\xi)$ to \eqref{viscous-shock} such that $\tv(0)=\frac{v_-+v_+}{2}$. Moreover, the following estimates hold: 
		\begin{equation}
		\begin{aligned}\label{shock-property}
		& \tu'<0,\quad \tv'>0, \quad '=\dfrac{d}{d\xi}, \;\xi=x-\sigma t,\\
		&C^{-1} \tv'(\xi) \le \tu'(\xi) \le C \tv' (\xi), \quad \xi \in \mathbb{R},\\
		&|\tv(\xi)-v_\pm|\le C\delta e^{-C\delta|\xi|},\quad \pm \xi>0,\\
		&|\tv'(\xi)|\le C\delta^2e^{-C\delta|\xi|},\quad |\tv''(\xi)|\le C\delta|\tv'(\xi)|.
		\end{aligned}
		\end{equation}
	\end{lemma}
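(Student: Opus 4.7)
The plan is to reduce the system \eqref{viscous-shock} to a single autonomous first-order ODE for $\tv$, and then obtain existence, monotonicity, and all the exponential-decay bounds by phase-plane analysis together with linearization near the end states $v_\pm$.

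First I would integrate the first equation in \eqref{viscous-shock} using $(\tv,\tu)(\pm\infty)=(v_\pm,u_\pm)$ to get $\tu(\xi)-u_-=-\sigma(\tv(\xi)-v_-)$ and in particular $\tu'=-\sigma\tv'$. Substituting this into the second equation of \eqref{viscous-shock} and integrating once from $-\infty$ yields the scalar profile equation
\begin{equation*}
\tv'(\xi)=-\frac{\tv(\xi)}{\sigma}\,H(\tv(\xi)),\qquad H(v):=\sigma^2(v-v_-)+p(v)-p(v_-).
\end{equation*}
By the Rankine--Hugoniot condition \eqref{RH} one has $H(v_-)=H(v_+)=0$, and since $p''>0$ the function $H$ is strictly convex, so $H<0$ on $(v_-,v_+)$ and the right-hand side is strictly positive there. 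Hence there is a unique (modulo translation) monotone heteroclinic $\tv:\mathbb{R}\to(v_-,v_+)$ connecting $v_\pm$; the normalization $\tv(0)=(v_-+v_+)/2$ kills the translation freedom. This immediately gives $\tv'>0$, $\tu'=-\sigma\tv'<0$, and the comparison $C^{-1}|\tv'|\le|\tu'|\le C|\tv'|$, since $\sigma$ is bounded above and below uniformly in $\delta<\delta_0$.

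Next I would establish the exponential decay. A direct computation gives $H'(v_\pm)=\sigma^2+p'(v_\pm)$, and since $\sigma^2=-(p(v_+)-p(v_-))/(v_+-v_-)$ is a mean value of $-p'$ on $[v_-,v_+]$, strict convexity of $p$ yields $H'(v_+)>0$, $H'(v_-)<0$, with $|H'(v_\pm)|\sim|p'(v_+)-p'(v_-)|\sim\delta$. Linearizing $\tv'=-\tv H(\tv)/\sigma$ at $v_+$ gives a scalar stable ODE with contraction rate of order $\delta$, so a Gronwall/stable-manifold argument delivers $|\tv(\xi)-v_+|\le C\delta e^{-C\delta\xi}$ for $\xi\ge0$, and symmetrically at $-\infty$. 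The $O(\delta)$ prefactor is simply $|\tv(0)-v_\pm|=O(\delta)$. Finally, Taylor expansion of $H$ about $v_\pm$ with $H''=p''=O(1)$ on a fixed neighborhood of $v_+$ gives $|H(\tv)|\le C\delta|\tv-v_\pm|$ along the profile, so plugging in the decay bound yields $|\tv'(\xi)|\le C\delta^2 e^{-C\delta|\xi|}$; differentiating the ODE once and using $|H'(\tv)|\le|H'(\tv)-H'(v_\pm)|+|H'(v_\pm)|=O(\delta)$ produces $|\tv''|\le C\delta|\tv'|$, completing all the estimates in \eqref{shock-property}.

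The only point requiring real care is verifying that both the exponential rate and the amplitude of $\tv'$ carry the correct small factor $\delta$ (respectively $\delta^2$) rather than an $O(1)$ constant depending only on the pressure law. This scaling is forced by the identity $\sigma^2+p'(v_\pm)=O(\delta)$ coming from the mean-value representation of $\sigma^2$ together with strict convexity of $p$, and is the one nonroutine ingredient. Since the analysis is entirely classical and carried out in \cite{MN85,G86} for this exact system, citing those references in place of a detailed argument is justified.
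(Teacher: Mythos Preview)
Your proposal is correct and is precisely the classical argument carried out in \cite{MN85,G86}; the paper itself does not give any proof but simply refers to those references, so your sketch is if anything more detailed than what the paper provides. The reduction to the scalar ODE $\tv'=-\tfrac{\tv}{\sigma}H(\tv)$, the convexity argument for monotonicity, and the linearization showing the $O(\delta)$ exponential rate and $O(\delta^2)$ amplitude of $\tv'$ are exactly the standard ingredients.
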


	\subsection{Useful $O(1)$-constants}
	In the later analysis, we will use the following $O(1)$-constants defined as
	\begin{equation} \label{O(1) constnats}
	\sigma_\ell:=\sqrt{-p'(v_-)}, \quad \alpha_\ell := \frac{\gamma+1}{2 \gamma \sigma_\ell p(v_-)}=\frac{p''(v_-)}{2|p'(v_-)|^2 \sigma_\ell}.
	\end{equation}
	These constants are indeed independent of the small shock strength $\delta$ since $v_+/2 \le v_- \le v_+$. Then, the following estimates on the $O(1)$-constants hold:
	\begin{equation}\label{shock_speed_est}
	|\sigma -\sigma_\ell|=\left|\sqrt{-\frac{p(v_+)-p(v_-)}{v_+-v_-}}-\sqrt{-p'(v_-)}\right| \le C \delta.
	\end{equation}
	Moreover, thanks to Lemma \ref{lem:shock-property}, the shock profile is monotone for the weak shock, and therefore $v_-\le \tv(x-\sigma t)\le v_+$ for all $x\in \R$. This yields the following estimates
	\begin{align}
	\begin{aligned}\label{shock_speed_est-2}
	&\|\sigma_\ell^2+p'(\tv)\|_{L^\infty}=\|p'(\tv)-p'(v_-)\|_{L^\infty} \le C \delta, \\
	&\left\| \frac{1}{\sigma_\ell^2}-\frac{p(\tv)^{-\frac{1}{\gamma}-1}}{\gamma} \right\|_{L^\infty}=\left\|\frac{(v_-)^{\gamma+1}}{\gamma }-\frac{p(\tv)^{-\frac{1}{\gamma}-1}}{\gamma}\right\|_{L^\infty} \le C \delta.
	\end{aligned}
	\end{align} 
	Throughout the paper, $C$ denotes a positive $O(1)$-constant which may change from line to line, but which is independent of the small constants like $\delta, \e_1$ and the lifespan $T$ given in Proposition \ref{apriori-estimate}.\\

	\section{A priori estimate and Proof of Theorem \ref{thm:main}}\label{sec:apriori}
	\setcounter{equation}{0}
	
	In this section, we first provide the a priori estimate for the perturbation, which is the key estimate for the main theorem. The proof of a priori estimate is presented in the next two sections. After stating the a priori estimate, we prove the global existence and time-asymptotic behavior of the solution, completing the proof of Theorem \ref{thm:main}.
	
	\subsection{Local existence}
	
	We first provide the local existence of strong solutions to the original NS system \eqref{eq:NS}.
	
	\begin{proposition}\label{prop:local}
		Let $\underbar{v}$ and $\underbar{u}$ be smooth monotone functions such that
		\[\underbar{v}(x) = v_{\pm},\quad\underbar{u}(x)=u_{\pm},\quad\mbox{for}\quad\pm x\ge 1.\]
		Then, for any constants $M_0$, $M_1$, $\underline{\kappa}_0$, $\overline{\kappa}_0$, $\underline{\kappa}_{1}$, and $\overline{\kappa}_1$ with
		\[0<M_0<M_1,\quad\mbox{and}\quad0<\underline{\kappa}_1<\underline{\kappa}_0<\overline{\kappa}_0<\overline{\kappa}_1,\]
		there exists a finite time $T_0>0$ such that if the initial data $(v_0,u_0)$ satisfy
		\[\|v_0-\underline{v}\|_{H^1(\R)}+\|u_0-\underline{u}\|_{H^1(\R)}\le M_0,\quad\mbox{and}\quad \underline{\kappa}_0\le v_0(x)\le\overline{\kappa}_0,\quad\forall x\in\R,\]
		the Navier-Stokes equations \eqref{eq:NS} admit a unique solution $(v,u)$ on $[0,T_0]$ satisfying
		\begin{align*}
		v-\underline{v}\in L^\infty ([0,T_0];H^1(\R)),\quad u-\underline{u}\in L^\infty([0,T_0];H^1(\R))\cap L^2([0,T_0];H^2(\R)),
		\end{align*}
		\[\|v-\underline{v}\|_{L^\infty([0,T_0];H^1(\R))}+\|u-\underline{u}\|_{L^\infty([0,T_0];H^1(\R))}\le M_1\]
		and
		\[\underline{\kappa}_1\le v(t,x)\le \overline{\kappa}_1,\quad \forall(t,x)\in [0,T_0]\times \R.\]
	\end{proposition}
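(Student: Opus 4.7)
The plan is to prove this by a standard linearization-and-iteration scheme, exploiting the hybrid parabolic/transport structure of \eqref{eq:NS}: once $u_x$ is known, the continuity equation $v_t=u_x$ can be integrated pointwise in $t$, while the momentum equation is a scalar parabolic equation for $u$ once $v$ is regarded as a coefficient. Setting $\varphi=v-\underline{v}$, $\psi=u-\underline{u}$ turns the problem into one with perturbations in $H^1(\bbr)$ and compactly supported source terms coming from $\underline{v},\underline{u}$. Given $v^n$ with $\underline{\kappa}_1\le v^n\le\overline{\kappa}_1$ and the appropriate $H^1$ bound, I would define $u^{n+1}$ as the unique solution to the linear parabolic problem
\[
u^{n+1}_t+p(v^n)_x=\Bigl(\frac{u^{n+1}_x}{v^n}\Bigr)_x,\qquad u^{n+1}|_{t=0}=u_0,
\]
and then close the iteration by the explicit formula
\[
v^{n+1}(t,x)=v_0(x)+\int_0^t u^{n+1}_x(s,x)\,ds.
\]

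Next I would carry out the uniform estimates. Standard parabolic theory for a uniformly elliptic operator with $H^1$-coefficient $1/v^n$ gives $u^{n+1}-\underline{u}\in L^\infty(0,T_0;H^1)\cap L^2(0,T_0;H^2)$ with a bound depending only on $M_0$, $M_1$, and the ellipticity constants $\underline{\kappa}_1,\overline{\kappa}_1$. For $v^{n+1}$, the integration formula together with the Minkowski inequality yields
\[
\|v^{n+1}(t)-v_0\|_{L^\infty}\le C\sqrt{t}\,\|u^{n+1}_x\|_{L^2(0,t;H^1)},
\qquad \|v^{n+1}_x\|_{L^\infty(0,t;L^2)}\le \|v_{0x}\|_{L^2}+C\sqrt{t}\,\|u^{n+1}_{xx}\|_{L^2L^2},
\]
so choosing $T_0$ small (depending on $M_0,M_1,\underline{\kappa}_0,\overline{\kappa}_0,\underline{\kappa}_1,\overline{\kappa}_1$) closes the iteration: the $L^\infty$ pointwise bound $\underline{\kappa}_1\le v^{n+1}\le\overline{\kappa}_1$ persists, and the $H^1$ norm stays below $M_1$.

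The main obstacle will be maintaining the lower bound $v^{n+1}\ge\underline{\kappa}_1>0$ simultaneously with the $H^1$ smallness condition, because the parabolic coefficient $1/v^n$ and the pressure source $p(v^n)_x$ both degrade catastrophically if $v^n$ touches zero; this is the only reason $T_0$ must be chosen small rather than arbitrary. Once the bounds are uniform in $n$, I would establish contraction of the iteration in the weaker norm $L^\infty(0,T_0;L^2)\times\bigl(L^\infty(0,T_0;L^2)\cap L^2(0,T_0;H^1)\bigr)$ by writing the equations for the differences $v^{n+1}-v^n$ and $u^{n+1}-u^n$, using the $L^\infty$ bounds on $v^n$ and the parabolic gain on $u^{n+1}-u^n$; shrinking $T_0$ further makes the Lipschitz constant strictly less than $1$.

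Finally, passing to the limit along this Cauchy sequence yields the desired solution $(v,u)$ with the stated regularity and bounds, and uniqueness follows from an analogous energy estimate on the difference of two solutions sharing the same initial data. The quantitative improvement from $(\underline{\kappa}_0,\overline{\kappa}_0)$ to the slightly wider $(\underline{\kappa}_1,\overline{\kappa}_1)$ and from $M_0$ to $M_1$ is automatic since $v-v_0$ and the $H^1$ perturbations vanish at $t=0$ and are controlled continuously in time by the estimates above. Since this is a by-now classical local well-posedness statement for one-dimensional barotropic Navier--Stokes in Lagrangian coordinates, the same argument is carried out in detail in \cite{KV21,MN85}, and I would cite those references rather than reproduce the calculation.
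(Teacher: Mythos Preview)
Your proposal is correct and matches the paper's approach: the paper itself omits the proof entirely, stating only that it follows from ``the standard argument of generating a sequence of approximate solutions and the Cauchy estimate'' with references to \cite{S76} and \cite{MV}. Your iteration scheme is precisely this standard argument, spelled out in more detail than the paper provides; the only minor discrepancy is in the citations (the paper points to \cite{S76,MV} rather than \cite{KV21,MN85}).
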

	
	\begin{proof}
		The proof of the local existence can be obtained by using the standard argument of generating a sequence of approximate solutions and the Cauchy estimate, see for example \cite{S76} and \cite{MV}. For the brevity of the paper, we omit the proof. 
	\end{proof}
		\subsection{Construction of weight function}
 We first construct the weight function $a=a(t,x)$ as
	\begin{equation} \label{a}
	a(t,x):=1+\frac{u_--\widetilde{u}(x-\sigma t)}{\sqrt{\delta }},
	\end{equation}
	where $\delta :=|u_- - u_+| $ denotes the shock strength. It follows from the definition of the weight function $a$ that $1 \le a \le 1+\sqrt{\delta }<\frac{3}{2}$ and
	\begin{equation}\label{a_x}
	\partial_x a=-\frac{\partial_x \tu}{\sqrt{\delta }}=\frac{\sigma\pa_x \tv}{\sqrt{\delta }}>0,\quad \mbox{and therefore},\quad |a_x|\sim \frac{|\pa_x\tv|}{\sqrt{\delta }}
	\end{equation}
	where we used \eqref{viscous-shock}$_1$.
	\subsection{Construction of shift}
	Next, we introduce the shift $X:\bbr_+\to\bbr$ as a solution to the following ODE:
	\begin{align}
	\begin{aligned}\label{ODE_X}
	\dot{X}(t)&=-\frac{M}{\delta}\Bigg(\int_{\R} a\left(x- \sigma t- X(t)\right)\widetilde{u}_x\left(x- \sigma t- X(t)\right)\big(u-\widetilde{u}(x- \sigma t- X(t)\big)\,d x\\
	&\hspace{2cm}+\frac{1}{\sigma}\int_\R a(x- \sigma t- X(t))\pa_xp\big(\widetilde{v}(x- \sigma t- X(t))\big)\big(u-\widetilde{u}(x- \sigma t- X(t))\big)\,d x\Bigg),\\
 X(0)&=0,
	\end{aligned}
	\end{align}
	where $M=\frac{5\sigma_\ell^3\alpha_\ell}{4}$. Then, the standard existence theorem for the ODE can be applied to guarantee the existence of the shift.
	
	\begin{proposition}
		For any $c_1,c_2,c_3>0$, there exists a constant $C>0$ such that the following is true. For any $T>0$, and any function $v,u\in L^\infty((0,T)\times\R)$ with
		\[c_1\le v(t,x)\le c_2,\quad |u(t,x)|\le c_3,\quad(t,x)\in[0,T]\times\R,\]
		the ODE \eqref{ODE_X} has a unique Lipschitz continuous solution $X$ on $[0,T]$. Moreover, we have
		\[|X(t)|\le Ct,\quad t\in[0,T].\]
	\end{proposition}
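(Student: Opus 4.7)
The plan is to treat \eqref{ODE_X} as an ODE of the form $\dot X(t)=F(t,X(t))$ with low regularity in $t$ (inherited from $u$) but good regularity in $X$, and apply the Carath\'eodory existence and uniqueness theorem. This calls for three verifications: (i) $F$ is uniformly bounded, which will also furnish the sublinear growth estimate $|X(t)|\le Ct$; (ii) $F(t,\cdot)$ is Lipschitz with a Lipschitz constant independent of $t$, which supplies uniqueness; and (iii) $F(\cdot,X)$ is measurable for each fixed $X$, which is immediate from measurability of $u$.

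For the uniform bound, I would change variables $y=x-\sigma t-X$ inside each integral defining $F$, and then use $1\le a\le 3/2$, $|u|\le c_3$, $|\tu|\le \max(|u_-|,|u_+|)$, together with the decay estimates
$$\|\tu_x\|_{L^1(\R)}\le C\delta,\qquad \|\partial_x p(\tv)\|_{L^1(\R)}=\|p'(\tv)\,\tv_x\|_{L^1(\R)}\le C\delta,$$
which follow from Lemma \ref{lem:shock-property}. The prefactor $M/\delta$ then cancels the $O(\delta)$ size of the integrals, giving $|F(t,X)|\le C$ with $C$ depending only on $c_1,c_2,c_3$ (and on the fixed shock strength $\delta$). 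Once existence is established, this bound together with $X(0)=0$ immediately yields $|X(t)|\le Ct$.

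For Lipschitz continuity in $X$, the crucial observation is that after the change of variables $y=x-\sigma t-X$, the $\tu(y)$-contributions become constants independent of $X$, so only expressions of the form
$$G(t,X):=\int_{\R}\phi(y)\,u(t,y+\sigma t+X)\,dy$$
need attention, where $\phi$ equals either $a\,\tu_x$ or $\sigma^{-1}a\,\partial_x p(\tv)$. A further substitution $z=y+\sigma t+X$ rewrites $G(t,X)$ as $\int_{\R}\phi(z-\sigma t-X)\,u(t,z)\,dz$. Both $\phi$ and $\phi'$ are smooth and decay exponentially in their argument (by Lemma \ref{lem:shock-property} together with the definition of $a$ in \eqref{a}--\eqref{a_x}), hence belong to $L^1(\R)$. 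Differentiating under the integral sign, justified by dominated convergence, shows that $X\mapsto G(t,X)$ is $C^1$ with
$$|\partial_X G(t,X)|\le \|u(t,\cdot)\|_{L^\infty}\,\|\phi'\|_{L^1(\R)}\le c_3\,\|\phi'\|_{L^1(\R)},$$
uniformly in $t\in[0,T]$. This delivers the required uniform Lipschitz constant for $F(t,\cdot)$.

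With (i)--(iii) in hand, Carath\'eodory's theorem produces a unique absolutely continuous solution $X$ on $[0,T]$, and the uniform bound on $|\dot X|$ upgrades this to Lipschitz continuity, completing the proof. The main obstacle worth emphasizing is that $u$ is only $L^\infty$ in $(t,x)$, so classical Picard--Lindel\"of is not directly applicable; the saving feature is the convolution-type structure of the right-hand side---a smooth, rapidly decaying weight integrated against a merely bounded $u$---which is precisely what supplies the Lipschitz-in-$X$ regularity and makes the Carath\'eodory framework work.
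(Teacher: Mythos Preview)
Your proposal is correct. The paper itself does not supply a proof of this proposition: it merely remarks, immediately before the statement, that ``the standard existence theorem for the ODE can be applied to guarantee the existence of the shift,'' and then moves on. Your argument via the Carath\'eodory theorem is exactly the standard existence result the authors have in mind, and your verification of the three hypotheses (uniform bound on $F$, uniform Lipschitz continuity of $F(t,\cdot)$, measurability of $F(\cdot,X)$) is the right way to make this rigorous given that $u$ is only assumed to lie in $L^\infty((0,T)\times\R)$. In particular, your change of variables $y=x-\sigma t-X$ to isolate the $X$-dependence, and the subsequent use of the exponential decay of $\phi=a\,\tu_x$ and $\phi'$ from Lemma~\ref{lem:shock-property} and \eqref{a_x} to control $\partial_X G$, are the essential steps; the cancellation of the prefactor $M/\delta$ against $\|\tu_x\|_{L^1(\R)}=\delta$ is also correctly identified. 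There is nothing to compare here beyond noting that you have supplied the details the paper omits.
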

	
	As the name implies, the constructed shift $X(t)$ will play an important role in the theory of $a$-contraction with shift. In the following, we use the following abbreviated notation for the shifted function. For any function $g:\bbr\to\bbr$, we define
	\[g^X(\cdot):=g(\cdot-X(t)),\quad t\ge0.\]
	
	\subsection{A priori estimate}
	We now state the a priori estimate, which is the key estimate for obtaining the time-asymptotic behavior of the NS equations.
	
	\begin{proposition}\label{apriori-estimate}
		For a given state $(v_+,u_+)\in\bbr^+\times\bbr$, there exist positive constants $C_0,\delta_0$, and $\e_1$ such that the following holds:
		
		Suppose that $(v,u)$ is the solution to \eqref{eq:NS} on $[0,T]$ for some $T>0$, and $(\tv,\tu)$ is defined in \eqref{viscous-shock}. Let $X$ be the Lipschitz continuous solution to \eqref{ODE_X} with weight function $a$ defined in \eqref{a}. Assume that the shock strength $\delta$ is less than $\delta_0$ and that
		\begin{align*}
		&v-\tv^X\in L^\infty(0,T;H^1(\bbr)),\\
		&u-\tu^X\in L^\infty(0,T;H^1(\bbr))\cap L^2(0,T;H^2(\bbr)),
		\end{align*}
		and
		\begin{equation}\label{smallness}
		\|v-\tv^X\|_{L^\infty(0,T;H^1(\bbr))}+\|u-\tu^X\|_{L^\infty(0,T;H^1(\bbr))}\le \e_1.
		\end{equation}
		Then, for all $0\le t\le T$,
		
		\begin{equation}
		\begin{aligned}\label{a-priori-1}
		&\sup_{t\in[0,T]}\left(\norm{v-\tv^X}_{H^1(\mathbb{R})}^2 +\norm{u-\tu^X}_{H^1(\mathbb{R})}^2 \right)+\delta \int_0^t | \dot{X}(s)|^2 \, d s \\ 
		&\quad +\int_0^t \left( G_1+G^S \right) \, ds +  \int_0^t \left( D_{v}+ D_{u_1} + D_{u_2} \right)\, ds  \\ 
		& \le C_0 \left(\norm{v_0-\tv}_{H^1(\mathbb{R})}^2 +\norm{u_0-\tu}_{H^1(\mathbb{R})}^2\right),
		\end{aligned}
		\end{equation}
		
		where $C_0$ is independent of $T$, and
		\begin{equation}\label{good terms}
		\begin{aligned}
		&G_1:=\int_\R |a_x^X|\left|p(v)-p(\tv^X)-\frac{u-\tu^X}{2C_*}\right|^2\,dx, \quad G^S:=\int_\R |\tv_x^X||u-\tu^X|^2\,dx,\\
        &D_v:=\int_\mathbb{R} |(v-\tv^X)_x|^2 \, dx, \quad D_{u_1}:=\int_\R |(u-\tu^X)_x|^2\,dx,\quad D_{u_2}:=\int_\R |(u-\tu^X)_{xx}|^2\,dx.
		\end{aligned}
		\end{equation}
		Here, $C_*$ is a positive constant defined in \eqref{C_star}.
	\end{proposition}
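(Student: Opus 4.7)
The plan is to establish the estimate through the classical $a$-contraction with shifts framework applied directly to $(v,u)$, avoiding the change of variables to $h = u - \mu(v) v_x / v$. Write $\phi := v-\tv^X$ and $\psi := u-\tu^X$, and introduce the weighted relative entropy
\begin{equation*}
\mathcal{E}(t) := \int_\bbr a^X \!\left( \frac{|\psi|^2}{2} + Q(v\,|\,\tv^X) \right) dx.
\end{equation*}
Differentiating in time, using \eqref{eq:NS} for $(v,u)$ and \eqref{viscous-shock} for $(\tv,\tu)$, and integrating by parts gives a decomposition of the form
\begin{equation*}
\frac{d}{dt}\mathcal{E}(t) = -\dot X \, \mathcal{Y}(U) - \mathcal{G}^S - \mathcal{G}_1 - \mathcal{D}_{u_1} + \mathcal{B}(U),
\end{equation*}
where $\mathcal Y$ is precisely (up to the factor $M/\delta$) the integrand in the ODE \eqref{ODE_X}, so that this choice of shift converts $-\dot X \mathcal{Y}$ into the contribution $\frac{\delta}{M}|\dot X|^2$ on the left-hand side. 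The terms $\mathcal{G}^S$, $\mathcal{G}_1$ match $G^S$ and $G_1$ after estimating $a_x^X \sim \tv_x^X/\sqrt\delta$ via \eqref{a_x}, while $\mathcal{D}_{u_1}$ provides the $D_{u_1}$ dissipation from the viscous term. The remainder $\mathcal{B}$ contains the ``bad'' hyperbolic cross terms.

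The core step is the maximization of the leading part of $\mathcal{B}$, namely the quadratic form in $p(v)-p(\tv^X)$ and $\psi$ produced by the pressure. Using the relative-quantity bounds in Lemma \ref{lem : Estimate-relative} and the $O(1)$-constant approximations \eqref{shock_speed_est}--\eqref{shock_speed_est-2}, I would complete the square in the integrand so that the cross term $a_x^X (p(v)-p(\tv^X))\psi$ combines with $a_x^X \psi^2$ and the leading part of $a_x^X Q(v|\tv^X)$ to yield exactly the good term $G_1$ around the combination $p(v)-p(\tv^X) - \psi/(2C_*)$ with $C_*$ determined by this maximization. The residuals are either of order $\sqrt\delta$, absorbed by $\mathcal G^S$ and $\mathcal G_1$, or higher-order in $\phi$, controlled by $\sqrt{\e_1}$ times $D_v + D_{u_1}$ via Sobolev embedding. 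Because we work with $(v,u)$ rather than $h$, the cross term does not become an exact square and this square completion is the main obstacle; it is the non-$h$-variable analogue of the computation the authors allude to in Section 4.2.

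With the zeroth-order estimate in hand, I would then run higher-order $H^1$ energy estimates directly on the NS system. Since \eqref{eq:NS}$_1$ carries no viscosity, the $D_v$ dissipation must be recovered by a Kawashima-type multiplier: rewrite \eqref{eq:NS}$_2$ as $\psi_t + (p(v)-p(\tv^X))_x = (\psi_x/v)_x + R$, test against $\phi_x$, and integrate by parts to produce $\int (-p'(v))|\phi_x|^2\,dx \gtrsim D_v$, paying for it with a harmless $\tfrac{1}{2}\tfrac{d}{dt}\int \phi_x \psi\, dx / v$ term and a $D_{u_1}$-controlled remainder. Testing \eqref{eq:NS}$_2$ against $-(\psi_x/v)_x$ (after shifting) gives $D_{u_2}$ together with cubic nonlinear errors estimated by $\e_1 \,(D_v + D_{u_1} + D_{u_2})$ using the Sobolev embedding $H^1 \hookrightarrow L^\infty$ and the smallness assumption \eqref{smallness}. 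The profile-dependent error terms are dominated by $G^S$ combined with the exponential localization in \eqref{shock-property}.

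Finally, I would sum the zeroth-order bound (multiplied by a sufficiently large constant) with the two first-order bounds, choosing weights so that every cross term, profile remainder, and nonlinear error is absorbed by one of $G_1, G^S, D_v, D_{u_1}, D_{u_2}, \delta |\dot X|^2$ on the left. This absorption is legitimate under $\delta<\delta_0$ and $\e_1$ small, and requires no Grönwall loop because all errors are of strictly higher order in $\delta^{1/2}$ or $\sqrt{\e_1}$. Integration in time from $0$ to $t$ then yields \eqref{a-priori-1} with $C_0$ independent of $T$.
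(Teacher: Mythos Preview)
Your high-level plan mirrors the paper's structure---weighted relative entropy at zeroth order, square completion in $p(v)-p(\tv^X)$, then a Kawashima-type cross multiplier for $D_v$ and the test against $-\psi_{xx}$ for $D_{u_2}$---and the $H^1$ part is essentially what the paper does in Section~\ref{sec:high-order}. But there is a genuine gap at the zeroth-order step. After the square completion the decomposition reads $\mathcal{J}^{\text{bad}}-\mathcal{J}^{\text{good}}\le \mathcal{B}-\mathcal{G}$, where the good side contains $\mathcal{G}_1=C_*\int a_x\big|p(v)-p(\tv)-\psi/(2C_*)\big|^2\,dx$, the diffusion $\mathcal{D}$, and $\mathcal{G}_2=\tfrac{\sigma}{2}\int a_x\psi^2\,dx$, while the bad side still carries $\mathcal{B}_1=\tfrac{1}{4C_*}\int a_x\psi^2\,dx$. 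The point is that $\tfrac{1}{4C_*}-\tfrac{\sigma}{2}$ is $O(\sqrt\delta)$ but \emph{positive}, so $\mathcal{B}_1-\mathcal{G}_2$ is a bad term of exactly the same order as $G^S$; it cannot be ``absorbed by $\mathcal G^S$'' because $G^S$ has not yet appeared as a net good term. Your claim that the residuals after maximization are $O(\sqrt\delta)$ relative to the good terms is precisely where the argument breaks.

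The missing ingredient is the nonlinear Poincar\'e-type inequality (Lemma~\ref{lem: KV inequality}). The paper passes to the variable $y=(u_--\tu)/\delta\in[0,1]$, writes $f=\psi\circ y^{-1}$, and shows $\mathcal{B}_1-\mathcal{G}_2\le \sigma_\ell^3\alpha_\ell\,\delta\int_0^1 f^2\,dy+C\delta^{3/2}\int_0^1 f^2\,dy$ together with $\mathcal{D}\ge \sigma_\ell^3\alpha_\ell\,\delta(1-C(\delta_0+\e_1))\int_0^1 y(1-y)|f'|^2\,dy$. Lemma~\ref{lem: KV inequality} then closes these two provided the mean $\overline f=\int_0^1 f\,dy$ is controlled---and this is the real role of the shift: the ODE \eqref{ODE_X} is built so that $\dot X\approx 2M\overline f$, hence $-\tfrac{\delta}{2M}|\dot X|^2$ absorbs $M\delta\,\overline f^{\,2}$. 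Your account treats the shift as merely depositing $\delta|\dot X|^2$ on the left, but its essential function is to cancel the zero mode that the Poincar\'e inequality cannot see. Only after this step does $G^S$ emerge as a good term (see \eqref{R1}). Insert this mechanism and the rest of your outline goes through.
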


	\subsection{Proof of Theorem \ref{thm:main}}

Based on Propositions \ref{prop:local} and \ref{apriori-estimate}, we use the continuation argument to prove  the global-in-time existence of perturbations. We also use Proposition \ref{apriori-estimate} to prove the long-time behavior. Since the proof is similar to that of the previous articles (e.g. \cite{HKK23,KVW23}), we present it in Appendix, and complete the proof of Theorem \ref{thm:main}.\\

	In the following sections, we will prove Proposition \ref{apriori-estimate} by using the $a$-contraction method, without using the $h$ variable as mentioned in Introduction.  In Section \ref{sec:rel_ent}, we provide the estimate on the relative entropy between the solution to the NS equation and the viscous shock with shifts, which gives the $L^2$-estimate for $(v,u)$ perturbations. Then, we obtain $H^1$-estimate for $(v,u)$ in Section \ref{sec:high-order}.
	
\section{Estimate on the weighted relative entropy with the shift}\label{sec:rel_ent}
	\setcounter{equation}{0}
 In this section, we estimate the $L^2$-perturbation of a solution to the NS equations \eqref{eq:NS} from the viscous shock profile \eqref{viscous-shock} by using the method of $a$-contraction. In what follows,  we omit the dependency on the shift $X$ in \eqref{viscous-shock} and \eqref{a} for the simplicity of notation as follows:
		\begin{equation*}
		(\tv,\tu)(t,x):=(\tv,\tu)(x-\sigma t -X(t)),\quad a(t,x):=a(x-\sigma t - X(t)).
		\end{equation*}

	The main goal of this section is to verify the following control on the $L^2$-perturbation between the solution $(v,u)$ to \eqref{eq:NS} and the viscous shock $(\widetilde{v},\widetilde{u})$ defined as in  \eqref{viscous-shock}.
	\begin{lemma} \label{Main Lemma} There exists a positive constant $C$ such that for all $t \in [0,T],$
		\begin{equation} \label{energy-est}
		\begin{aligned}
		&\int_\mathbb{R} \big( |u-\tu|^2 +|v-\tv|^2   \big) \, d x + \int_0^t \left( \delta  | \dot{X}|^2 + G_1+G^S+D \right) \, ds \\ 
		&\quad \le C \int_\mathbb{R} \big( |u_0-\tu|^2 +|v_0- \tv|^2  \big) \, dx,
		\end{aligned}
		\end{equation}
		where $G_1$, $G^S$, and $D_{u_1}$ are defined in \eqref{good terms}.
	\end{lemma}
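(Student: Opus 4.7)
The plan is to run the $a$-contraction scheme directly on the conservative variables $(v,u)$, which, as emphasized in the Introduction, makes the algebra heavier than in the $h$-variable formulation but keeps the whole computation inside \eqref{eq:NS}. Set $\phi:=v-\tv$, $\psi:=u-\tu$, and introduce the relative entropy
\[
\eta(U|\tU):=\tfrac{\psi^2}{2}+Q(v|\tv),\qquad Q(v):=\tfrac{v^{1-\gamma}}{\gamma-1}.
\]
Because $(\tv,\tu)(t,x)=(\tv,\tu)(x-\sigma t-X(t))$ and $a=a(x-\sigma t-X(t))$, the perturbation satisfies
\[
\phi_t-\psi_x=\dot X\,\tv',\qquad \psi_t+(p(v)-p(\tv))_x=\Big(\tfrac{u_x}{v}-\tfrac{\tu_x}{\tv}\Big)_x+\dot X\,\tu',
\]
and $a_t=-(\sigma+\dot X)a_x$. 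Differentiating $\int a\,\eta(U|\tU)\,dx$ in time and integrating by parts yields a decomposition
\[
\frac{d}{dt}\!\int a\,\eta(U|\tU)\,dx=\dot X\,\mathcal{Y}+\mathcal{B}_{\mathrm{hyp}}+\mathcal{B}_{\mathrm{par}}-\mathcal{G}-\mathcal{D},
\]
where $\mathcal{D}\ge c\,D_{u_1}$ is the leading viscous dissipation, $\mathcal{G}$ collects the $a_x$-weighted positive Kawashima-type terms (together with the shock-profile contribution of the form $G^S$), $\mathcal{Y}$ is the linear-in-perturbation weight that matches the right-hand side of \eqref{ODE_X}, and $\mathcal{B}_{\mathrm{hyp}}$, $\mathcal{B}_{\mathrm{par}}$ are the remaining hyperbolic and parabolic bad terms.

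With the shift inserted via \eqref{ODE_X}, the product $\dot X\,\mathcal{Y}$ produces exactly the sign-definite contribution that delivers the $\delta\int_0^t|\dot X|^2\,ds$ term on the left of \eqref{energy-est}; the higher-order mismatch is cubic in $(\phi,\psi)$ and can be absorbed into $\tfrac12 G^S+\tfrac{1}{16}D_{u_1}$ via Young's inequality, the relative-quantity bounds in Lemma~\ref{lem : Estimate-relative}, the pointwise shock estimates in Lemma~\ref{lem:shock-property}, and the smallness \eqref{smallness} (using $|a_x|\sim|\tv'|/\sqrt\delta$). Modulo cubic remainders in the relative quantities $p(v|\tv)$ and $Q(v|\tv)$, the hyperbolic part reduces to the quadratic form
\[
\mathcal{B}_{\mathrm{hyp}}\approx \int a_x\Big[\tfrac{\sigma}{2}\psi^2-\psi\,(p(v)-p(\tv))\Big]\,dx,
\]
so the first main task is to \emph{maximize} this form against the available good terms.

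The heart of the argument is the completion of squares for this form. Using $\sigma=\sigma_\ell+O(\delta)$ together with \eqref{shock_speed_est}--\eqref{shock_speed_est-2}, I rewrite
\[
\tfrac{\sigma}{2}\psi^2-\psi(p(v)-p(\tv))=\tfrac{\sigma}{2}\Big(\psi-\tfrac{1}{\sigma}(p(v)-p(\tv))\Big)^2-\tfrac{1}{2\sigma}(p(v)-p(\tv))^2,
\]
identify the negative $(p(v)-p(\tv))^2$ contribution with the leading piece of $\int a_x Q(v|\tv)\,dx$ extracted from $\mathcal{G}$ (via the third bullet of Lemma~\ref{lem : Estimate-relative}), and rename the square so as to match the coefficient $\tfrac{1}{2C_*}$ in the definition of $G_1$ in \eqref{good terms}. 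This produces $G_1+\tfrac12 G^S$ on the good side, at the cost of $O(\delta)$ and $O(\e_1)$ remainders. The parabolic bad term $\mathcal{B}_{\mathrm{par}}$, consisting of integrals of the form $\int a_x\,\psi\,(u_x/v-\tu_x/\tv)\,dx$, is split after one integration by parts into pieces bounded by $\tfrac18 D_{u_1}$, $C\delta\,G^S$, and $C\e_1(G_1+G^S)$, using $|\tv'|\lesssim\delta^2$ and $\|v-\tv\|_{L^\infty}\lesssim\e_1$. \emph{This maximization is where I expect the main obstacle to lie}: in the $h$-variable formulation the quadratic form diagonalizes without remainders, whereas here every Taylor expansion around $(v_-,u_-)$ introduces $O(\delta)$ cross-terms between $\psi$, $\psi_x$, and $p(v)-p(\tv)$ that must be tracked and absorbed into small multiples of $G_1,G^S,D_{u_1}$ rather than being identically zero.

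Assembling the estimates yields a differential inequality of the form
\[
\frac{d}{dt}\!\int a\,\eta(U|\tU)\,dx+c\Big(\delta|\dot X|^2+G_1+G^S+D_{u_1}\Big)\le 0,
\]
for some $c>0$ independent of $T$. Since $1\le a\le 3/2$ and Lemma~\ref{lem : Estimate-relative} gives $|\phi|^2+|\psi|^2\lesssim \eta(U|\tU)\lesssim |\phi|^2+|\psi|^2$ under \eqref{smallness}, integrating in time produces \eqref{energy-est}.
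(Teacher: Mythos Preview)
Your proposal has a genuine gap: the completion-of-squares step you describe does \emph{not} close by itself, and you have omitted the Poincar\'e-type argument that is the actual mechanism closing the estimate.

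Concretely, after your completion of squares and after matching the $(p(v)-p(\tv))^2$ contribution against $\sigma\int a_x Q(v|\tv)\,dx$, the leading coefficients coincide \emph{exactly}, with no margin: by Lemma~\ref{lem : Estimate-relative}(3) one has $\sigma Q(v|\tv)\ge \frac{1}{2\sigma_\ell}|p(v)-p(\tv)|^2+O(\delta)$, while your leftover carries coefficient $\frac{1}{2\sigma}=\frac{1}{2\sigma_\ell}+O(\delta)$. The sub-leading mismatch comes from the $-\int a\,\tu_x\,p(v|\tv)\,dx$ term (note $|\tu_x|=\sqrt{\delta}\,a_x$), and after the change $a_x\,dx\to \sqrt{\delta}\,dy$ it is of size $\sigma_\ell^3\alpha_\ell\,\delta\int_0^1 f^2\,dy$, i.e.\ an \emph{$O(1)$ multiple} of $G^S$, not $O(\delta)G^S$ or $O(\e_1)G^S$. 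Your claim that the maximization ``produces $G_1+\tfrac12 G^S$ on the good side, at the cost of $O(\delta)$ and $O(\e_1)$ remainders'' is therefore false: there is no $\tfrac12 G^S$ available from the algebra alone, and the remainder cannot be absorbed by smallness.

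What the paper does (Section~\ref{Est-main-part}) is keep track of this $O(1)$ leftover as $\mathcal{B}_1-\mathcal{G}_2$, pass to the variable $y=(u_--\tu)/\delta\in(0,1)$ with $f:=(u-\tu)\circ y^{-1}$, and show that the diffusion satisfies $\mathcal{D}\ge \sigma_\ell^3\alpha_\ell\,\delta(1-C(\delta_0+\e))\int_0^1 y(1-y)|f'|^2\,dy$. The Poincar\'e-type inequality $\int_0^1|f-\bar f|^2\,dy\le\tfrac12\int_0^1 y(1-y)|f'|^2\,dy$ (Lemma~\ref{lem: KV inequality}) then controls $\int_0^1 f^2\,dy$ by $\tfrac34\mathcal{D}$ \emph{plus} the mean $\bar f^2$, and it is precisely the shift term $-\frac{\delta}{2M}|\dot X|^2$ (with the specific choice $M=\tfrac{5}{4}\sigma_\ell^3\alpha_\ell$) that cancels this mean via the estimate $-\frac{\delta}{2M}|\dot X|^2\le -M\delta\,\bar f^2+C\delta\int_0^1 f^2\,dy$. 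Only after this three-way balance among $\mathcal{B}_1-\mathcal{G}_2$, $\tfrac34\mathcal{D}$, and $-\frac{\delta}{2M}|\dot X|^2$ does one obtain the good term $-C_1 G^S$. Your sketch treats the shift contribution as merely producing $\delta\int|\dot X|^2$ with ``cubic'' remainders and treats $\mathcal{D}$ as pure dissipation; both miss their essential role in the cancellation.
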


	\subsection{Relative entropy method}
	To prove Lemma \ref{Main Lemma}, we rely on the relative entropy method. To this end, we rewrite the NS system \eqref{eq:NS} in the following abstract form:
	\begin{equation}\label{eq:NS-abs}
	\partial_t U +\partial_x A(U) = \partial_x (M(U) \partial_x D \eta (U)),
	\end{equation}
	where 
	\[U:=\begin{pmatrix}
	v\\u
	\end{pmatrix},\quad 
	A(U):=\begin{pmatrix}
	-u\\p(v) 
	\end{pmatrix},\quad 
	D\eta(U):=\begin{pmatrix}
	-p(v)\\u ,
	\end{pmatrix},\]
	and 
	\begin{equation*}
	M(U):=
	\begin{pmatrix}
	0 &0 \\
	0 &\frac{1}{v} \\
	\end{pmatrix}.
	\end{equation*}
	 Similarly, the shifted viscous wave $\tU$ 
	\begin{equation} \label{shifted underlying wave}
	\tU(t,x):=\widetilde{U}(x-\sigma t-X(t)),
	\end{equation}
	satisfies a similar system as
	\[\partial_t\tU +\partial_x A(\tU)=\partial_x \left(M(\tU) \partial_x D \eta(\tU) \right) -\dot{X} \partial_x \tU.\]
	
	Now, we define the relative entropy between $U=(v,u)$ and $\overline{U}=(\overline{v},\overline{u})$ as
	\[\eta(U|\overline{U}):=\eta(U)-\eta(\overline{U})-D\eta(\overline{U})(U-\overline{U}),\]
	and the relative flux $A(U|\overline{U})$ 
	\[A(U|\overline{U})=A(U)-A(\overline{U})-DA(\overline{U})(U-\overline{U}).\]
	Finally, let  $G(U;\overline{U})$ be the relative entropy flux definded as
	\[G(U;\overline{U})=G(U)-G(\overline{U})-D\eta(\overline{U})(A(U)-A(\overline{U})),\]
	where $G$ is the entropy flux for $\eta$ satisfying the condition $D_iG(U)=\sum_{k=1}^2 D_k \eta(U)D_iA_k(U)$. In the case of NS system, we consider $G(U)=p(v)u$, and therefore, we can compute $A(U|\overline{U})$ and $G(U;\overline{U})$ as
	\[\eta(U|\overline{U})=\frac{|u-\overline{u}|^2}{2}+Q(v|\overline{v}),\quad A(U|\overline{U})=\begin{pmatrix}
	0\\
	p(v|\overline{v})
	\end{pmatrix}, \quad \text{and} \quad 
	G(U;\overline{U})=(p(v)-p(\overline{v}))(u-\overline{u}).\]
	Here, the relative internal energy $Q(v|\overline{v})$ and the relative pressure $p(v|\overline{v})$ are defined as
	\[Q(v|\overline{v})=Q(v)-Q(\overline{v})-Q'(\overline{v})(v-\overline{v}) \quad \mbox{and} \quad p(v|\overline{v})=p(v)-p(\overline{v})-p'(\overline{v})(v-\overline{v}).\]

	In order to obtain $L^2$-perturbation estimate in Lemma \ref{Main Lemma}, we focus on estimating the weighted relative entropy between the solution $U$ and the shifted viscous shock wave $\tU$:
	\[\int_\mathbb{R} a (t,x) \eta\left(U(t,x)|\tU(t,x)\right)\,d x,\quad\mbox{where}\quad a(t,x):=a(t,x-X(t)).\]
	
	\begin{lemma} \label{lem:rel-ent}
		Let $a$ be the weight function defined by \eqref{a} and $X:[0,T]\to\bbr$ be any Lipschitz continuous function. Let $U$ be a solution to \eqref{eq:NS-abs},  and $\tU$ be the shifted viscous shock wave defined in  \eqref{shifted underlying wave}.  Then
		\begin{equation}\label{est-weight-rel-ent} 
		\frac{d}{dt}\int_\mathbb{R} a (t,x) \eta (U(t,x))|\tU(t,x)) \, dx=\dot{X}(t)Y(U)+\mathcal{J}^{\textup{bad}}(U)-\mathcal{J}^{\textup{good}}(U),
		\end{equation}
		where the terms $Y(U)$, $\mathcal{J}^{\textup{bad}}$, and $\mathcal{J}^{\textup{good}}$ are defined as
		\begin{align*}
		Y(U)&:=-\int_\mathbb{R} a_x \eta(U|\tU)\,d x +\int_\mathbb{R} a D^2\eta(\tU)\widetilde{U}_x^X (U-\tU)\,d x,\\
		\mathcal{J}^{\textup{bad}}(U)&:=\int_\mathbb{R} a_x (p(v)-p(\tv))(u-\tu) \, dx-\int_\mathbb{R} a \tu_x^X p(v | \tv) \,d x\\
		&\quad-\int_\mathbb{R} a_x \frac{(u-\tu) \partial_x (u-\tu)}{v}   dx + \int_\mathbb{R} a_x   \frac{(u-\tu)(v-\tv) \partial_x \tu}{v \tv}  dx+\int_\mathbb{R} a \partial_x (u-\tu)  \frac{(v-\tv) }{v \tv} \partial_x \tu \, dx,\\
		\mathcal{J}^{\textup{good}}(U)&:= \frac{\sigma}{2}\int_\mathbb{R} a_x |u-\tu|^2 \, d x+\sigma \int_\mathbb{R} a_x Q(v|\tv) \, dx + \int_\mathbb{R} a \frac{|\partial_x (u-\tu)|^2}{v} \,dx.
		\end{align*}
	\end{lemma}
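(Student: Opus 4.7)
The plan is to apply the relative entropy method directly to $\int_\R a\,\eta(U|\tU)\,dx$ via the product rule
\[
\frac{d}{dt}\int_\R a\,\eta(U|\tU)\,dx = \int_\R a_t\,\eta(U|\tU)\,dx + \int_\R a\,\partial_t \eta(U|\tU)\,dx,
\]
exploiting the fact that both $a(t,x)$ and $\tU(t,x)$ depend on $x$ only through $\xi=x-\sigma t-X(t)$, so that $a_t=-(\sigma+\dot X)a_x$ and $\partial_t\tU=-(\sigma+\dot X)\tU_x$. The chain rule then gives
\[
\partial_t\eta(U|\tU) = [D\eta(U)-D\eta(\tU)]\,\partial_t U + (\sigma+\dot X)\,D^2\eta(\tU)\,\tU_x\,(U-\tU),
\]
and substituting the abstract form \eqref{eq:NS-abs} for $\partial_t U$ splits the right hand side into a hyperbolic piece (to be handled by the relative-entropy-flux identity) and a viscous piece (to be handled by two integrations by parts). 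The goal is then to regroup the resulting summands into $\dot X Y(U)$, $\mathcal{J}^{\mathrm{bad}}$, and $-\mathcal{J}^{\mathrm{good}}$.

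The $\dot X$-coefficients come only from the two traveling-wave identities above: $-\dot X\int a_x\eta(U|\tU)\,dx$ from $a_t$ and $\dot X\int a\,D^2\eta(\tU)\tU_x(U-\tU)\,dx$ from the second expansion, summing to $\dot X Y(U)$. For the hyperbolic piece I would use the standard identity
\[
[D\eta(U)-D\eta(\tU)]\partial_x A(U) = \partial_x G(U;\tU) + \partial_x D\eta(\tU)\cdot(A(U)-A(\tU)),
\]
integrate against $a$, and split $A(U)-A(\tU)=A'(\tU)(U-\tU)+A(U|\tU)$. This produces the two hyperbolic terms of $\mathcal{J}^{\mathrm{bad}}$, namely $\int a_x(p(v)-p(\tv))(u-\tu)\,dx$ and $-\int a\,\tu_x\,p(v|\tv)\,dx$, while the $-\sigma$-piece of $a_t$ contributes $-\tfrac{\sigma}{2}\int a_x|u-\tu|^2\,dx-\sigma\int a_x Q(v|\tv)\,dx$ (the hyperbolic part of $-\mathcal{J}^{\mathrm{good}}$). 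A residual $\sigma\int a\,D^2\eta(\tU)\tU_x(U-\tU)\,dx-\int a\,D^2\eta(\tU)\tU_x A'(\tU)(U-\tU)\,dx$ is left over; using $\tu_x=-\sigma\tv_x$ it collapses to $-\int a\,(p'(\tv)+\sigma^2)\tv_x(u-\tu)\,dx$.

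The viscous contribution $\int a(u-\tu)(u_x/v)_x\,dx$ I would expand by an integration by parts, then write $u_x=(u-\tu)_x+\tu_x$ and $1/v=1/\tv+(\tv-v)/(v\tv)$. Four of the resulting summands reproduce exactly $-\int a|(u-\tu)_x|^2/v\,dx$ (completing $-\mathcal{J}^{\mathrm{good}}$) together with the three viscous terms of $\mathcal{J}^{\mathrm{bad}}$. The two leftover pieces $-\int a_x(u-\tu)\tu_x/\tv\,dx-\int a(u-\tu)_x\tu_x/\tv\,dx$ combine into $-\int\partial_x[a(u-\tu)]\,\tu_x/\tv\,dx$, which after one more integration by parts becomes $\int a(u-\tu)(\tu_x/\tv)_x\,dx$.

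The main obstacle will be recognising that this last viscous residual cancels the hyperbolic residual of the previous paragraph. Differentiating the shock ODE \eqref{viscous-shock}$_2$ and using $\tu_x=-\sigma\tv_x$ yields
\[
(\tu_x/\tv)_x = p(\tv)_x - \sigma \tu_x = (p'(\tv) + \sigma^2)\,\tv_x,
\]
so that $\int a(u-\tu)(\tu_x/\tv)_x\,dx=\int a\,(p'(\tv)+\sigma^2)\tv_x(u-\tu)\,dx$ is exactly the opposite of the hyperbolic residual. Every remaining term then belongs to $\dot X Y(U)+\mathcal{J}^{\mathrm{bad}}-\mathcal{J}^{\mathrm{good}}$, yielding \eqref{est-weight-rel-ent}.
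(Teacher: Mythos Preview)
Your argument is correct. The only organizational difference from the paper is where the viscous shock ODE enters. The paper invokes \cite[Lemma 2.3]{KV21}, which uses the full PDE satisfied by $\tU$,
\[
\partial_t\tU + \partial_x A(\tU) = \partial_x\bigl(M(\tU)\partial_x D\eta(\tU)\bigr) - \dot X\,\tU_x,
\]
so that the hyperbolic contribution $\int a\,D^2\eta(\tU)\tU_x\,DA(\tU)(U-\tU)\,dx$ and the viscous contribution $-\int a(u-\tu)(\tu_x/\tv)_x\,dx$ are never generated separately: they cancel implicitly inside the abstract decomposition, leaving only $I_{11},\dots,I_{15}$. You instead use only the traveling-wave identity $\partial_t\tU=-(\sigma+\dot X)\tU_x$, which produces the two residual terms, and then close them explicitly via $(\tu_x/\tv)_x=(p'(\tv)+\sigma^2)\tv_x$ from \eqref{viscous-shock}$_2$. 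Your route is more self-contained (no appeal to the KV21 lemma) and makes the role of the shock profile equation transparent; the paper's route is shorter but relies on the cited framework. Either way the computation is the same in substance.
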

	\begin{proof}
		Since the system \eqref{eq:NS-abs} is written in a general hyperbolic system, we may use the same computations as in \cite[Lemma 2.3]{KV21} to estimate the time derivative of the weighted relative entropy as
		\begin{align*}
		\frac{d}{dt} \int_\mathbb{R} a \eta ( U | \tU) \, dx &=\dot{X}(t)Y(U)-\sigma\int_\mathbb{R} a_x\eta(U|\tU) \, dx+ \sum_{i=1}^5 I_{1i},
		\end{align*}
		where
		
		\begin{align*}
		I_{11}&:=-\int_\R a \partial_x G(U;\tU) \, dx=\int_\R a_x (p(v)-p(\tv) )(u-\tu)\, dx,\\
		I_{12}&:=-\int_\R a \partial_x D \eta (\tU) A(U| \tU) \, dx=-\int_\R a \tu_x^X p(v|\tv) \, dx,\\
		I_{13}&:=\int_\R a \left(D\eta(U)-D\eta(\tU) \right) \partial_x \left( M(U)\partial_x \left(D\eta (U)-D\eta(\tU) \right)\right) \, dx\\
  & \ =\int_\R a  (u-\tu) \partial_x \left( \frac{\partial_x (u-\tu)}{v} \right)  dx\\
		&\ = -\int_\R a \frac{|\partial_x(u-\tu)|^2}{v} \, dx  -\int_\R a_x  (u-\tu) \frac{\partial_x(u-\tu)}{v}  dx,\\
		I_{14}&:=\int_\R a \left( D \eta(U)-D\eta(\tU)\right)\partial_x \left( (M(U) -M(\tU)) \partial_x D \eta(\tU)\right)\,dx\\
  &=\int_\R a  (u-\tu) \partial_x \left( \left(\frac{1}{v}-\frac{1}{\tv} \right)\partial_x \tu \right) dx\\
		&=\int_\R a_x \frac{(v-\tv)(u-\tu)\partial_x \tu}{v \tv}  \, dx  + \int_\R a \partial_x (u-\tu)  \frac{(v-\tv)\partial_x \tu}{v \tv} \, dx,\\
		I_{15}&:=\int_\R a (D\eta)(U|\tU)\partial_x \left( M(\tU) \partial_x D \eta(\tU) \right)\, dx=0.
		\end{align*}
		Thus, combining all the estimates on $I_{1i}$ above, we obtain the desired estimate.
	\end{proof}

	\subsection{Maximization on $p(v)-p(\tv)$} 
	Among the terms in $\mathcal{J}^\text{bad}$, a primary bad term is 
	\[\int_\mathbb{R} a_x (p(v)-p(\tv))(u-\tu) \, d x\]
	where the perturbations for $p(v)$ and $u$ are coupled. In order to exploit the parabolic term on $u$-variable and hence use the Poincar\'e-type inequality, we separate $u-\tu$ from $p(v)-p(\tv)$ by using the quadratic structure of $p(v)-p(\tv)$. We first obtain the following estimates on several terms in $\mathcal{J}^{\textup{bad}}(U)$ and $\mathcal{J}^{\textup{good}}(U)$. 

	\begin{lemma} \cite[Lemma 4.3]{HKKL}\label{lem:quad} For any $\delta>0$ small enough, let $C_*$ be the constant as
		 \begin{equation}\label{C_star}
		C_* = \frac{1}{2}\left(\frac{1}{\sigma_{\ell}}-\sqrt{\delta }\frac{\gamma+1}{\gamma}\frac{1}{p(v_-)}\right), \quad \sigma_\ell =\sqrt{-p'(v_-)}.
		\end{equation}
  Then, we have
		\begin{align}
		\begin{aligned}\label{quadratic estimate}
		&-\int_{\mathbb{R}} a \tu_x p(v|\tv) \, dx-\sigma \int_{\mathbb{R}} a_x Q(v|\tv) \, dx \\ &\le -C_*\int_{\R}a_x|p(v)-p(\tv)|^2\,dx+C\delta  \int_\R a_x\big|p(v)-p(\tv)\big|^2 \, d x+C\int_\R a_x\big|p(v)-p(\tv)\big|^3 \, d x.
		\end{aligned}
		\end{align}
	\end{lemma}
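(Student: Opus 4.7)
The plan is to apply the sharp quadratic expansions for $p(v|\tv)$ and $Q(v|\tv)$ from item (3) of Lemma \ref{lem : Estimate-relative}, and to combine them using the algebraic identity between $\tu_x$ and $a_x$ forced by the travelling-wave equation \eqref{viscous-shock}$_1$ and the definition \eqref{a} of the weight. The smallness hypothesis $|p(v)-p(\tv)|<\delta_*$ required by that lemma is guaranteed, via Sobolev embedding, by the a priori bound \eqref{smallness} together with the properties of $\tv$ in Lemma \ref{lem:shock-property}.

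First, the first equation of \eqref{viscous-shock} gives $\tu_x=-\sigma\tv_x$, and plugging this into \eqref{a_x} produces the crucial identity $\tu_x=-\sqrt{\delta}\,a_x$. Since $a>0$, $\tu_x<0$, and $p(v|\tv)\ge 0$, the first integral on the left-hand side has a nonnegative integrand, and the upper bound on $p(v|\tv)$ gives
\begin{equation*}
-\int_\R a\,\tu_x\, p(v|\tv)\,dx \le \sqrt{\delta}\int_\R a\, a_x\Bigl(\frac{\gamma+1}{2\gamma\,p(\tv)}+C\delta\Bigr)|p(v)-p(\tv)|^2\,dx.
\end{equation*}
Using $1\le a\le 1+\sqrt{\delta}$ and $|p(\tv)-p(v_-)|\le C\delta$, I would replace $a/p(\tv)$ by $1/p(v_-)$ at the cost of an $O(\delta)$ error absorbed into the acceptable $C\delta\int a_x|p(v)-p(\tv)|^2\,dx$ remainder on the right-hand side of the claim.

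Second, since $-\sigma a_x<0$, the lower bound on $Q(v|\tv)$ yields
\begin{equation*}
-\sigma\int_\R a_x\,Q(v|\tv)\,dx \le -\sigma\int_\R a_x\frac{p(\tv)^{-1/\gamma-1}}{2\gamma}|p(v)-p(\tv)|^2\,dx + C\int_\R a_x|p(v)-p(\tv)|^3\,dx,
\end{equation*}
which already produces the cubic error term in the claim. The leading coefficient on the right-hand side limits to $\sigma_\ell\cdot v_-^{\gamma+1}/(2\gamma)=1/(2\sigma_\ell)$, since $p(\tv)^{-1/\gamma-1}=\tv^{\gamma+1}$ and $\sigma_\ell^2=\gamma v_-^{-\gamma-1}$; combined with \eqref{shock_speed_est} and \eqref{shock_speed_est-2}, this gives $\sigma\,p(\tv)^{-1/\gamma-1}/(2\gamma)=1/(2\sigma_\ell)+O(\delta)$, again absorbable into the same $C\delta$ remainder.

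Adding the two estimates, the net leading coefficient of $\int a_x|p(v)-p(\tv)|^2\,dx$ is exactly
\begin{equation*}
\frac{\sqrt{\delta}\,(\gamma+1)}{2\gamma\,p(v_-)}-\frac{1}{2\sigma_\ell}=-C_*,
\end{equation*}
yielding the claimed inequality. The only delicate point is verifying that this leading constant matches the stated $C_*$ exactly; the remaining work is routine bookkeeping of $\sqrt{\delta}$- and $\delta$-order corrections around the state $v_-$, and poses no analytic obstacle.
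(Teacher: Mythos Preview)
Your proposal is correct and follows exactly the approach one would expect (and presumably the one in \cite{HKKL}, which the paper cites without reproducing the argument). The paper itself gives no proof of this lemma, so there is nothing further to compare. One small clarification: when you invoke the bound on $p(v|\tv)$ from Lemma~\ref{lem : Estimate-relative}\,(3), the parameter ``$\delta$'' there is the smallness of $|p(v)-p(\tv)|$, not the shock strength; the cleanest bookkeeping is to use the equivalent cubic form $p(v|\tv)\le \frac{\gamma+1}{2\gamma p(\tv)}|p(v)-p(\tv)|^2 + C|p(v)-p(\tv)|^3$ and send that remainder directly into the cubic error term of \eqref{quadratic estimate}, after which the $\sqrt{\delta}$ and $\delta$ corrections from $a-1$ and $p(\tv)-p(v_-)$ land in the $C\delta$ term exactly as you describe.
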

 Using Lemma \ref{lem:quad} and quadratic structure with respect to  $p(v)-p(\tv)$, we can derive an upper bound of $\mathcal{J}^{\textup{bad}} -\mathcal{J}^{\textup{good}}(U)$.
 \begin{lemma} For $\mathcal{J}^{\textup{bad}}$ and $\mathcal{J}^{\textup{good}}(U)$ defined in \eqref{est-weight-rel-ent}, we have
   	\[\mathcal{J}^{\text{bad}} - \mathcal{J}^{\text{good}} \le  \mathcal{B}-\mathcal{G}.\]
    Here, $\mathcal{B}$ and $\mathcal{G}$ are defined as follows:
	\begin{align*}
	&\mathcal{B}(U) =\sum_{i=1}^6 \mathcal{B}_i(U),\\
	&\mathcal{G}(U) = \mathcal{G}_1(U) + \mathcal{G}_2(U) + \mathcal{D}(U).
    \end{align*}
	where $\mathcal{B}_i$, $\mathcal{G}_i$, and $\mathcal{D}$ are given by:
    \begin{align*}
	&\mathcal{B}_1 :=\frac{1}{4 C_*} \int_\mathbb{R} a_x  |u-\tu|^2  dx, \quad \mathcal{B}_2 :=-\int_\mathbb{R} a_x \frac{(u-\tu) \partial_x (u-\tu)}{v}  dx, \quad \mathcal{B}_3 := \int_\mathbb{R} a_x \frac{(u-\tu)(v-\tv) \partial_x \tu}{v \tv} dx, \\
  &\mathcal{B}_4 :=\int_\mathbb{R} a   \frac{(v-\tv)\partial_x (u-\tu)\partial_x \tu}{v \tv}  dx, \quad \mathcal{B}_5 :=C \delta  \int_\R a_x\big|p(v)-p(\tv)\big|^2 d x, \quad \mathcal{B}_6 :=C\int_\R a_x\big|p(v)-p(\tv)\big|^3  d x,
	\end{align*}
    and
	\begin{align*}
	&\mathcal{G}_1:=C_* \int_\mathbb{R} a_x \left|p(v)-p(\tv)-\frac{u-\tu}{2C_*} \right|^2 \, d x,\quad \mathcal{G}_2:=\frac{\sigma}{2} \int_\mathbb{R} a_x |u-\tu|^2  \, dx,\quad \mathcal{D}:= \int_\mathbb{R} a \frac{|\partial_x (u-\tu)|^2}{v} \, dx.
    \end{align*}
 \end{lemma}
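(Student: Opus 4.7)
The plan is to complete the square on the cross term $\int a_x(p(v)-p(\tv))(u-\tu)\,dx$ in $\mathcal{J}^{\text{bad}}$ so as to produce the coercive term $-\mathcal{G}_1$, and then use Lemma~4.3 (the quadratic estimate \eqref{quadratic estimate}) to absorb the leftover $C_*\int a_x|p(v)-p(\tv)|^2\,dx$ generated by this completion. The remaining pieces of $\mathcal{J}^{\text{bad}}$ are already in the form of the $\mathcal{B}_i$'s, while the three negative terms of $\mathcal{J}^{\text{good}}$ directly supply $-\mathcal{G}_2$, $-\mathcal{D}$, and a cancellation of the $\sigma\int a_x Q(v|\tv)\,dx$ coming from \eqref{quadratic estimate}.

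First, I would record the pointwise algebraic identity
\begin{equation*}
(p(v)-p(\tv))(u-\tu)=-C_*\Big|p(v)-p(\tv)-\tfrac{u-\tu}{2C_*}\Big|^2+C_*\bigl(p(v)-p(\tv)\bigr)^2+\tfrac{|u-\tu|^2}{4C_*},
\end{equation*}
and multiply by $a_x\ge 0$ before integrating. This rewrites the first term in $\mathcal{J}^{\text{bad}}$ as $-\mathcal{G}_1+C_*\int a_x(p(v)-p(\tv))^2\,dx+\mathcal{B}_1$, which is precisely how $\mathcal{G}_1$ and $\mathcal{B}_1$ were designed.

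Next, applying \eqref{quadratic estimate} from Lemma~\ref{lem:quad} to the combination $-\int a\tu_x p(v|\tv)\,dx-\sigma\int a_x Q(v|\tv)\,dx$, I would obtain an upper bound of the form $-C_*\int a_x|p(v)-p(\tv)|^2\,dx+\mathcal{B}_5+\mathcal{B}_6$. Substituting this into $\mathcal{J}^{\text{bad}}-\mathcal{J}^{\text{good}}$, the $+C_*\int a_x(p(v)-p(\tv))^2\,dx$ coming from the square completion cancels exactly with the $-C_*\int a_x(p(v)-p(\tv))^2\,dx$ here, and the $\sigma\int a_x Q(v|\tv)\,dx$ term in $\mathcal{J}^{\text{good}}$ cancels the one added on the left of \eqref{quadratic estimate}. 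The three remaining terms of $\mathcal{J}^{\text{bad}}$ involving $\partial_x(u-\tu)$ and $(v-\tv)$ are the textual definitions of $\mathcal{B}_2$, $\mathcal{B}_3$, $\mathcal{B}_4$, while the terms $\tfrac{\sigma}{2}\int a_x|u-\tu|^2\,dx$ and $\int a\tfrac{|\partial_x(u-\tu)|^2}{v}\,dx$ of $\mathcal{J}^{\text{good}}$ contribute $-\mathcal{G}_2$ and $-\mathcal{D}$.

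Collecting everything yields the claimed bound $\mathcal{J}^{\text{bad}}-\mathcal{J}^{\text{good}}\le\mathcal{B}-\mathcal{G}$. There is no genuine obstacle here; the proof is essentially bookkeeping. The only point requiring care is consistency: the constant $C_*$ appearing in the definition of $\mathcal{G}_1$ must coincide with the one used in Lemma~\ref{lem:quad}, so that the two $C_*\int a_x(p(v)-p(\tv))^2\,dx$ contributions cancel identically (rather than leaving a positive residue of size $O(1)$ that would spoil the $\mathcal{G}_1$ coercivity). This was arranged upstream by the specific choice \eqref{C_star} of $C_*$.
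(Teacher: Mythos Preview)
Your proposal is correct and follows essentially the same approach as the paper: both apply Lemma~\ref{lem:quad} to the combination $-\int a\tu_x p(v|\tv)\,dx - \sigma\int a_x Q(v|\tv)\,dx$ and complete the square on the cross term $\int a_x(p(v)-p(\tv))(u-\tu)\,dx$ using the same constant $C_*$, with the remaining terms passing through unchanged as $\mathcal{B}_2,\mathcal{B}_3,\mathcal{B}_4,\mathcal{G}_2,\mathcal{D}$. The only cosmetic difference is the order in which you present the two steps.
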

 \begin{proof}
    We use Lemma \ref{lem:quad} to derive a quadratic structure in terms of $p(v)-p(\tv)$ as follows:
    \begin{equation}\label{eq: max}
          \begin{aligned} 
        &\int_\mathbb{R} a_x (p(v)-p(\tv))(u-\tu) \,dx -\int_\mathbb{R} a \tu_x p(v|\tv) \, dx -\sigma \int_\mathbb{R} a_x Q(v|\tv) \, dx  \\ 
        &\le \int_\mathbb{R} a_x (p(v)-p(\tv))(u-\tu) \,dx  -C_*\int_{\R}a_x|p(v)-p(\tv)|^2\,dx\\ 
        &\quad +C\delta  \int_\R a_x\big|p(v)-p(\tv)\big|^2 \, d x+C\int_\R a_x\big|p(v)-p(\tv)\big|^3 \, d x.
    \end{aligned}
    \end{equation}
    Then, the first two terms in the right-hand side of \eqref{eq: max} can be rewritten by using the quadratic structure of $p(v)-p(\tv)$ as 
    \begin{equation}
        \begin{aligned}
            &\int_\mathbb{R} a_x (p(v)-p(\tv))(u-\tu) \,dx  -C_*\int_{\R}a_x|p(v)-p(\tv)|^2\,dx \\&=\int_\mathbb{R} a_x \left(-C_* \left( (p(v)-p(\tv))-\frac{u-\tu}{2C_*}\right)^2+\frac{(u-\tu)^2}{4C_*} \right) dx.
        \end{aligned}
    \end{equation}
    Since the other terms in $\mathcal{J^{\text{bad}}}$ and $\mathcal{J}^{\text{good}}$ are unchanged, we obtain the desired inequality.
 \end{proof}
 The estimate \eqref{est-weight-rel-ent} in Lemma \ref{lem:rel-ent} can be further bounded as
	\begin{equation}\label{est-1}
	\frac{d}{dt}\int_{\bbr}a\eta(U|\tU)\,dx \le \dot{X}(t) Y(U) + \mathcal{B}(U)-\mathcal{G}(U),
	\end{equation}
 On the other hand, since $Y(U)$ is expanded as
	\begin{align*}
	Y(U)&=-\int_{\bbr} a_x \eta(U|\tU)\,dx +\int_\bbr a D^2\eta(\tU)(\widetilde{U})^X_x (U-\tU)\,dx\\
	& = -\int_{\bbr} a_x \left(\frac{|u-\widetilde{u}^X|^2}{2}+Q(v|\widetilde{v}^X)\right)\,dx\\
	&\quad +\int_{\bbr} a  \tu_x (u-\widetilde{u}^X)  \, dx
	 -\int_{\bbr} a p'(\widetilde{v}^X)\widetilde{v}^X_x (v-\widetilde{v}^X)\,dx,
	\end{align*}
	we decompose $Y$ as
	\[Y= \sum_{i=1}^4 Y_{i},\]
	where
	\begin{align*}
	Y_{1} &:=\int_{\mathbb{R}} a \tu_x (u-\tu) \,dx,& Y_{2}&:=\frac{1}{\sigma}\int_{\mathbb{R}} a p'(\tv)\tv_x (u-\tu) \,dx,\\
	Y_{3} &:=- \int_\mathbb{R} a p'(\tv) \tv_x \left(v-\tv+\frac{(u-\tu)}{\sigma} \right) \, d x, &Y_{4} &:= -\int_{\bbr} a_x \left(\frac{|u-\widetilde{u}^X|^2}{2}+Q(v|\widetilde{v}^X)\right)\,dx	
	\end{align*}
	We now define a shift function $X(t)$ so that it satisfies the following ODE:  
	\begin{equation}\label{shift}
	\dot{X} = -\frac{M}{\delta }(Y_{1}+Y_{2}),\quad X(0)=0.
	\end{equation}
	With this choice of shift $X$, the term $\dot{X}(t)Y(U)$ in \eqref{est-1} can be written as
	\[\dot{X}(t)Y(U)= -\frac{\delta }{M}|\dot{X}|^2+\dot{X}\sum_{i=3}^4Y_{i}.\]
	To summarize, we decompose the right-hand side of \eqref{est-1} as
	\begin{align}
	\begin{aligned}\label{est}
	\frac{d}{dt}\int_{\bbr}a\eta(U|\tU)\,dx &= \underbrace{-\frac{\delta }{2M}|\dot{X}|^2 + \mathcal{B}_1 -\mathcal{G}_2-\frac{3}{4}\mathcal{D}}_{=: \mathcal{R}_1}\\
	&\quad \underbrace{-\frac{\delta }{2M}|\dot{X}|^2 + \dot{X}\sum_{i=3}^4 Y_{i} +\sum_{i=2}^6\mathcal{B}_i -\mathcal{G}_1-\frac{1}{4}\mathcal{D}}_{=:\mathcal{R}_2}.
	\end{aligned}
	\end{align}
	In the following subsections, we estimate the terms in $\mathcal{R}_1$ and $\mathcal{R}_2$ respectively.
	
	\subsection{Estimate of $\mathcal{R}_1$} \label{Est-main-part}
	Estimation of $\mathcal{R}_1$ is the most important part in the proof of Lemma \ref{Main Lemma}, in which the Poincar\'e-type inequality is crucially used. For a fixed $t\ge0$, we define an auxiliary variable $y$ as
	\begin{equation*} 
	y:=\frac{u_- -\tu(x-\sigma t-X(t))}{\delta }.
	\end{equation*}
	Then it follows from the definition that the map $x\mapsto y=y(x)$ is one-to-one and 
	\begin{equation*}
	\frac{d y}{d x} = -\frac{1}{\delta } \tu_x>0,\quad\mbox{and}\quad\lim_{x\to -\infty} y=0, \quad \lim_{x \to \infty}y=1.
	\end{equation*}
	Using the new variable $y$, we will apply the following Poincarè-type inequality:
	
	\begin{lemma}\cite[Lemma 2.9]{KV21}\label{lem: KV inequality} For any $f:[0,1] \to \R$ with $\int_0^1 y(1-y) |f'|^2 \, dy <\infty$, 
		\begin{equation*}
		\int_0^1 \left| f-\int_0^1 f \,dy \right|^2 dy \le \frac{1}{2} \int_0^1 y(1-y)|f'|^2 \,dy.
		\end{equation*}
	\end{lemma}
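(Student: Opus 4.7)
The plan is to diagonalize the quadratic inequality in the spectral basis of a natural singular Sturm-Liouville operator. First I would subtract the mean and work with $g := f - \int_0^1 f\, dy$, so that the problem reduces to showing
\begin{equation*}
\|g\|_{L^2(0,1)}^2 \le \tfrac12 \int_0^1 y(1-y)\,(g')^2\, dy \quad \text{whenever} \quad \int_0^1 g\, dy = 0.
\end{equation*}
The right-hand side is the Dirichlet form of the degenerate elliptic operator $L u := -(y(1-y)\, u')'$, which is symmetric on $L^2(0,1)$: because the weight $y(1-y)$ vanishes at both endpoints, integration by parts produces no boundary term and no boundary conditions need be imposed on $L$.

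Next, I would invoke the classical fact that the shifted Legendre polynomials $\{P_n\}_{n\ge 0}$, orthonormal in $L^2(0,1)$ with respect to Lebesgue measure, diagonalize $L$ with eigenvalues $\lambda_n = n(n+1)$. This follows from the Legendre equation $-((1-x^2)Q_n'(x))' = n(n+1)Q_n(x)$ on $[-1,1]$ via the affine change of variables $x = 2y-1$, which maps $1-x^2$ to $4y(1-y)$ and rescales the derivative so that the equation becomes $L P_n = n(n+1) P_n$ on $[0,1]$. In particular $P_0 \equiv 1$ with eigenvalue $0$, and $P_1$ is a multiple of $y - \tfrac12$ with eigenvalue $\lambda_1 = 2$.

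Expanding $g = \sum_{n\ge 0} c_n P_n$ in $L^2(0,1)$, the mean-zero condition forces $c_0 = 0$, hence $\|g\|_{L^2}^2 = \sum_{n\ge 1} c_n^2$. A weighted integration by parts (with no boundary contribution since the weight vanishes at $0$ and $1$) then yields
\begin{equation*}
\int_0^1 y(1-y)\,(g')^2\, dy = \int_0^1 g \cdot L g\, dy = \sum_{n\ge 1} \lambda_n\, c_n^2 \ge \lambda_1 \sum_{n\ge 1} c_n^2 = 2\,\|g\|_{L^2}^2,
\end{equation*}
since $\lambda_n \ge \lambda_1 = 2$ for every $n\ge 1$. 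Dividing by $2$ gives the claimed inequality with sharp constant; sharpness is witnessed by $g(y) = y - \tfrac12$.

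The main obstacle is purely technical and lies in justifying the identities above for the class of functions actually hypothesized, namely those satisfying only $\int_0^1 y(1-y)|f'|^2\, dy<\infty$, which does not by itself force $f'\in L^1$ near the endpoints. The natural cure is an approximation argument: truncate $f$ near $0$ and $1$ and mollify to obtain smooth $f_k$ with $f_k \to f$ pointwise and $\sqrt{y(1-y)}\, f_k' \to \sqrt{y(1-y)}\, f'$ in $L^2$, apply the spectral identity to each $f_k$, and pass to the limit using dominated convergence together with the vanishing of the weight at the endpoints, which kills any boundary contributions throughout.
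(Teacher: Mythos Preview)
The paper does not supply its own proof of this lemma; it is stated with a citation to \cite[Lemma~2.9]{KV21} and then used as a black box. Your argument via the spectral gap of the shifted Legendre operator $Lu=-(y(1-y)u')'$ is correct and is precisely the standard proof of this Poincar\'e-type inequality (and, as far as I know, the one given in the cited reference): the eigenvalues $\lambda_n=n(n+1)$ and the orthonormality of the shifted Legendre polynomials in $L^2(0,1)$ immediately yield the sharp constant $\tfrac12=1/\lambda_1$ on the mean-zero subspace, with equality for $g(y)=y-\tfrac12$. The density/approximation issue you raise is genuine but routine; in the paper's application $f=(u-\tu)\circ y^{-1}$ is an $H^1$ perturbation composed with a smooth diffeomorphism, so the regularity needed to run the integration by parts is available without difficulty.
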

	
	We apply Lemma \ref{lem: KV inequality} to the perturbation $f$ of the following form:
	\begin{equation*} 
	f:=\left(u(t,\cdot)-\tu(\cdot-\sigma t-X(t)\right)\circ y^{-1}.
	\end{equation*} Therefore, the goal of this subsection is to represent $\mathcal{R}_1$ in terms of $f$ and then use Lemma \ref{lem: KV inequality} to estimate it. In the following, we estimate the terms in $\mathcal{R}_1$ separately.\\
	
	\noindent $\bullet$  (Estimate of $\frac{\delta }{2M}|\dot{X}|^2$): 
	We use the definition of $Y_1$ and change of variables for $y$ to observe that
	\[Y_1=\int_{\R} a \tu_x^X (u-\tu)\,dx
	=-\delta  \int_0^1 a f \,dy.\]
	Then, using \eqref{shock_speed_est} and $\| a-1 \|_{L^\infty(\R_+ \times \R)} \le \sqrt{\delta }$, we have
	\[\left| Y_1+\delta  \int_0^1 f \, dy \right|\le\delta \int_0^1|a-1||f| dy \le   \delta ^{3/2} \int_0^1 |f| \, dy.\]
	To estimate $Y_2$, we first use the relation $\sigma \tv_x=-\tu_x $ and change of variables for $y$ to yield
	\[Y_2=-\frac{1}{\sigma^2} \int_{\mathbb{R}} a p'(\tv)\tu_x (u-\tu) \, d x=\frac{\delta }{\sigma^2} \int_0^1 a p'(\tv) f \, dy.\]
	This, together with the estimates \eqref{shock_speed_est}, \eqref{shock_speed_est-2}, and $\| a-1 \|_{L^\infty(\R_+ \times \R)} \le \sqrt{\delta }$, implies 
	\begin{align*}
	\left| Y_2 + \delta  \int_0^1 f \, dy \right|\le \delta \int_0^1\left|\frac{a p'(\tv)}{\sigma^2}+1\right||f|dy \le C \delta  (\sqrt{\delta }+\delta ) \int_0^1 |f| \, dy.
	\end{align*}
	Since $\dot{X}=-\frac{M}{\delta }(Y_1+Y_2)$, we combine the estimates for $Y_1$ and $Y_2$ to obtain
	\begin{align*}
	\left| \dot{X} - 2 M \int_0^1 f \, dy \right| \le \frac{M}{\delta } \left( \left| Y_1 +\delta  \int_0^1 f \, dy \right|+\left| Y_2+\delta  \int_0^1 f\, dy \right| \right)\le C (\sqrt{\delta }+ \delta ) \int_0^1 |f| \, dy,
	\end{align*}
	which implies
	\begin{align*}
	\left( \left| 2M \int_0^1 f \, dy \right| -|\dot{X}| \right)^2 \le C(\sqrt{\delta }+\delta )^2 \int_0^1 |f|^2 \, dy\le C\delta \int_0^1|f|^2\,d y.
	\end{align*}
	We use an elementary inequality $\frac{p^2}{2}-q^2 \le (p-q)^2$ for $p,q \in \mathbb{R}$ to obtain 
	\begin{align*}
	2M^2\left(\int_0^1 f \, dy \right)^2-|\dot{X}|^2 \le C \delta  \int_0^1 |f|^2 \, dy,
	\end{align*}
	and by rearranging the terms, we obtain
	\begin{equation}\label{est-dotX}
	-\frac{\delta }{2M} |\dot{X}|^2 \le -M \delta  \left(\int_0^1 f \, dy \right)^2 +C \delta  \int_0^1 |f|^2 \, dy.
	\end{equation}
	
	\noindent $\bullet$ (Estimates of $\mathcal{B}_1$ and  $\mathcal{G}_2$):
	Recall that $\mathcal{B}_1$ and $\mathcal{G}_2$ are 
	\begin{align*}
	&\mathcal{B}_1:=\frac{1}{4C_*}\int_\mathbb{R} a_x |u-\tu|^2 \, d x \ \  \text{and} \ \ \mathcal{G}_2:=\frac{\sigma}{2} \int_\mathbb{R} a_x |u-\tu|^2 \, d x.
	\end{align*}
	Therefore,
	\begin{align*}
	\mathcal{B}_1-\mathcal{G}_2&=\left(\frac{1}{4C_*}-\frac{\sigma}{2}\right)\int_{\R}a_x|u-\tu|^2\,dx=-\left(\frac{1}{4C_*}-\frac{\sigma}{2}\right)\frac{1}{\sqrt{\delta }}\int_{\R}\tu_x^X|u-\tu|^2\,dx\\
	&=\sqrt{\delta }\left(\frac{1}{4C_*}-\frac{\sigma}{2}\right)\int_0^1 f^2\,dy.
	\end{align*}
	where $C_*$ defined in \eqref{C_star} can be written as
	\begin{equation*}
	C_*= \frac{1}{2 \sigma_\ell} -(\sqrt{\delta }+\delta )\alpha_\ell \sigma_\ell,
	\end{equation*}
	with simple notation $\alpha_\ell=\frac{\gamma+1}{2 \gamma \sigma_\ell p(v_-)}$ in \eqref{O(1) constnats}. On the other hand, using \eqref{shock_speed_est}, \eqref{shock_speed_est-2}, we obtain
	\begin{align*}
	\sqrt{\delta }\left(\frac{1}{4C_*}-\frac{\sigma}{2}\right)&\le \frac{\sigma_\ell}{2} \frac{\sqrt{\delta }}{1-2 (\sqrt{\delta }+\delta ) \sigma_\ell^2 \alpha_\ell}- \frac{\sigma }{2} \sqrt{\delta } 
	\\
	&\le \frac{\sqrt{\delta }}{2} \left( \frac{1}{1-2 (\sqrt{\delta }+\delta ) \sigma_\ell^2 \alpha_\ell} (\sigma_\ell -\sigma) +\sigma \left(\frac{1}{1-2 (\sqrt{\delta }+\delta ) \sigma_\ell^2 \alpha_\ell}-1\right) \right) \\
	&\le C  \delta ^{3/2} +\delta \sigma_\ell^3 \alpha_\ell.
	\end{align*}

	Therefore, we have
	\begin{equation}\label{R1.2}
	\mathcal{B}_1-\mathcal{G}_2 \le   C\delta ^{3/2} \int_0^1 f^2 \, dy+  \sigma_\ell^3 \alpha_\ell \delta  \int_0^1 f^2 \, dy. 
	\end{equation}

	\noindent $\bullet$ (Estimate of $\mathcal{D}(U)$):
	First, using $a \geq 1$ and change of variables, we estimate the diffusion term $\mathcal{D}$ in terms of $f$:
	\begin{equation*}
	\mathcal{D} \geq \int_\R \frac{1}{v}|\partial_x (u-\tu)|^2 dx=\int_0^1 |\partial_y f|^2 \frac{1}{v} \left(\frac{dy}{d x} \right) dy.
	\end{equation*}
	
	Similar to the previous result in \cite[Lemma 4.5]{KVW23}, there exists $C>0$ such that the following estimate holds:
	\begin{equation}\label{Diffusion}
	\left|   \frac{1}{y(1-y)} \frac{1}{\tv} \left( \frac{dy}{dx} \right)-\frac{\sigma}{2 \sigma_\ell}\frac{\delta  v''(p_-)}{ |v'(p_-)|^2 } \right| \le C \delta ^2.
	\end{equation}
	 On the other hand, since $C^{-1} \leq v \leq C$, we have
	\begin{equation}\label{estimate in diffusion}
	\left| \frac{\tv}{v}-1 \right| \leq C \left|\tv-v \right| 
	\leq C \varepsilon.
	\end{equation}
	Then, using \eqref{Diffusion} and \eqref{estimate in diffusion}, we obtain the lower bound for $\mathcal{D}$ as
	\begin{align*}
	\mathcal{D} &\geq \int_0^1 |\partial_y f|^2 \frac{\tv}{v} \frac{1}{\tv} \left(\frac{dy}{dx} \right) dy \\
	&\geq (1-C \varepsilon) \left(\frac{\sigma}{2 \sigma_\ell}\frac{\delta  v''(p_-)}{ |v'(p_-)|^2 }-C \delta ^2 \right) \int_0^1 y(1-y) \left| \partial_y f \right|^2 dy.
	\end{align*}
	Finally, since
	\begin{align*}
	\sigma_\ell^3 \alpha_\ell=\frac{1}{2} (1+\gamma) \frac{1}{v_-}=\frac{1}{2 }\frac{v''(p_-)}{|v'(p_-)|^2 },
	\end{align*}
	we obtain
	\begin{equation} \label{R1.3}
	\mathcal{D} \ge \sigma_\ell^3 \alpha_\ell \delta  (1-C (\delta_0 + \varepsilon)) \int_0^1 y(1-y) |\partial_y f|^2 \, dy.
	\end{equation}
	
	\noindent $\bullet$ (Estimate of $\mathcal{R}_1$): We now combine the estimates \eqref{R1.2} and \eqref{R1.3}, we have
	\begin{align*}
	\mathcal{B}_1-\mathcal{G}_2-\frac{3}{4}\mathcal{D} \le \sigma_\ell^3 \alpha_\ell \delta  \left(   (1+C \sqrt{\delta })\int_0^1 f^2 \, dy - \frac{3}{4}(1-C(\delta_0+\varepsilon)) \int_0^1 y(1-y) |\partial_y f|^2 \, dy \right),
	\end{align*}
	which together with the smallness of $\delta_0,\varepsilon$ yields
	\[\mathcal{B}_1-\mathcal{G}_2-\frac{3}{4}\mathcal{D} \le \sigma_\ell^3 \alpha_\ell \delta  \left(   \frac{9}{8} \int_0^1 f^2 \, dy -\frac{5}{8}  \int_0^1 y(1-y) |\partial_y f|^2 \, dy \right). \] 
	Then, using Lemma \ref{lem: KV inequality}  with the identity
	\[\int_0^1 |f-\overline{f}|^2 \, dy=\int_0^1 f^2 \, dy-\overline{f}^2, \quad \overline{f}:=\int_0^1 f \, dy,\]
	we have
	\begin{align*}
	\mathcal{B}_1-\mathcal{G}_2-\frac{3}{4}\mathcal{D} \le -\frac{\sigma_\ell^3 \alpha_\ell \delta }{8} \int_0^1 f^2 \, dy + \frac{5 \sigma_\ell^3 \alpha_\ell \delta }{4} \left(\int_0^1 f\,dy\right)^2.
	\end{align*}
	Finally, using \eqref{est-dotX} with the choice $M=\dfrac{5 \sigma_\ell^3 \alpha_\ell}{4}$, we have
	\begin{align*}
	&-\frac{\delta }{2M}|\dot{X}|^2+\mathcal{B}_1-\mathcal{G}_2-\frac{3}{4}\mathcal{D} \le  -\frac{\sigma_\ell^3 \alpha_\ell \delta }{16} \int_0^1 f^2 \, dy,
	\end{align*}
	which implies 
	\begin{equation} \label{R1}
	\begin{aligned}
	\mathcal{R}_1 \le -C_1 \int_\mathbb{R} |\tv_x| |u-\tu|^2 \, dx=:-C_1G^S.
	\end{aligned}
	\end{equation}
	\subsection{Estimate of remaining terms} \label{Est-Remaining-terms}
	We now estimate the remaining terms in $\mathcal{R}_2$. We first substitute the estimate of $\mathcal{R}_1$ \eqref{R1} to \eqref{est} and use the Young's inequality
	\begin{equation*}
	\dot{X}\sum_{i=3}^4 Y_i \le \frac{\delta }{4M} |\dot{X}|^2 +\frac{C}{\delta }\sum_{i=3}^4 |Y_i|^2
	\end{equation*}
	to have
	\begin{equation}\label{est-R}
	\begin{aligned}{}
	\frac{d}{dt} \int_\mathbb{R} a \eta(U|\tU) \, dx \le -C_1 G^S -\frac{\delta }{4M} |\dot{X}|^2 +\frac{C}{\delta }\sum_{i=3}^4 |Y_i|^2 + \sum_{i=2}^6 \mathcal{B}_i -\mathcal{G}_1  -\frac{1}{4} \mathcal{D}.
	\end{aligned}
	\end{equation}
	Therefore, to close the estimate of the weighted relative entropy, it suffices to control the remaining terms $|Y_i|^2$ and $\mathcal{B}_i$.\\
	\begin{lemma} \label{est-R2}For sufficiently small $\delta$ and $\varepsilon$
 	\begin{equation}
	\begin{aligned}
	\frac{C}{\delta }\sum_{i=3}^4 |Y_i|^2 \le \frac{1}{32} (\mathcal{G}_1  + C_1 G^S), \quad \sum_{i=2}^6 \mathcal{B}_i  \le \frac{5}{32} \left( \mathcal{G}_1 + C_1 G^S + \mathcal{D} \right). 
	\end{aligned}
	\end{equation}

	\end{lemma}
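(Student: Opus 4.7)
The plan is to estimate each of the seven quantities $|Y_i|^2/\delta$ ($i=3,4$) and $\mathcal{B}_i$ ($i=2,\ldots,6$) separately, showing that each is bounded by a factor of order $\delta^{1/2}$, $\varepsilon_1$, or $\varepsilon_1/\sqrt{\delta}$ times a combination of $\mathcal{G}_1$, $G^S$, and $\mathcal{D}$. Three tools will be used throughout. First, from \eqref{a_x} and \eqref{shock-property}, $|a_x|=\sigma|\tv_x|/\sqrt{\delta}$, so that $\|a_x\|_{L^\infty}\le C\delta^{3/2}$, $\|a_x\|_{L^1}\le C\sqrt{\delta}$, and $\int|a_x||u-\tu|^2\,dx=(\sigma/\sqrt{\delta})G^S$. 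Second, writing $p(v)-p(\tv)=[p(v)-p(\tv)-(u-\tu)/(2C_*)]+(u-\tu)/(2C_*)$ and squaring gives
\[\int|a_x||p(v)-p(\tv)|^2\,dx\le C\mathcal{G}_1+CG^S/\sqrt{\delta}.\]
Third, Sobolev embedding combined with \eqref{smallness} yields $\|u-\tu\|_{L^\infty}+\|v-\tv\|_{L^\infty}+\|p(v)-p(\tv)\|_{L^\infty}\le C\varepsilon_1$.

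For $Y_3$, I would first observe that Taylor expansion of $p(v)$ together with \eqref{shock_speed_est-2} and the explicit form of $C_*$ in \eqref{C_star} implies
\[p(v)-p(\tv)-\frac{u-\tu}{2C_*}=-\sigma_\ell^2\bigl[(v-\tv)+(u-\tu)/\sigma\bigr]+O(\sqrt{\delta}+\varepsilon_1)\cdot(|v-\tv|+|u-\tu|).\]
Since $\tv_x=(\sqrt{\delta}/\sigma)a_x$, Cauchy--Schwarz applied to $Y_3$ against the weight $|a_x|$, together with the second tool, yields $|Y_3|^2/\delta\le C(\sqrt{\delta}+\varepsilon_1)(\mathcal{G}_1+G^S)$. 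For $Y_4$, Lemma \ref{lem : Estimate-relative} bounds $Q(v|\tv)\le C|p(v)-p(\tv)|^2$; factoring out $L^\infty$ norms via the third tool and applying Cauchy--Schwarz with the first and second tools produces $|Y_4|^2\le C\sqrt{\delta}\varepsilon_1^2\mathcal{G}_1+C\varepsilon_1^2 G^S$, hence $|Y_4|^2/\delta\le C(\varepsilon_1^2/\sqrt{\delta})\mathcal{G}_1+C(\varepsilon_1^2/\delta)G^S$.

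For the $\mathcal{B}_i$ pieces, the derivative-carrying terms $\mathcal{B}_2$, $\mathcal{B}_3$, and $\mathcal{B}_4$ are handled by Young's inequality of the form $|xy|\le\eta x^2+y^2/(4\eta)$: the $\partial_x(u-\tu)$ factor is absorbed into a small multiple $\eta\mathcal{D}$, while the complementary integral picks up an extra factor of $|a_x|$ or $|\tv_x|$, which is $O(\delta^{3/2})$ by the first tool. Relative-quantity bounds from Lemma \ref{lem : Estimate-relative} convert any $|v-\tv|^2$ into $C|p(v)-p(\tv)|^2$, to be handled by the second tool. The purely $|a_x|$-weighted term $\mathcal{B}_5$ is handled directly by the second tool, giving $C\delta\mathcal{G}_1+C\sqrt{\delta}G^S$. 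Finally, the cubic term $\mathcal{B}_6$ is treated by first pulling out $\|p(v)-p(\tv)\|_{L^\infty}\le C\varepsilon_1$ via the third tool and then applying the second tool to the remaining quadratic integral, producing $C\varepsilon_1\mathcal{G}_1+C(\varepsilon_1/\sqrt{\delta})G^S$.

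The main technical obstacle is the $1/\sqrt{\delta}$ factor appearing on the $G^S$-side of the first and second tools, which propagates into the bounds for $Y_4$ and $\mathcal{B}_6$ as $\varepsilon_1^2/\delta$ and $\varepsilon_1/\sqrt{\delta}$ respectively. Hence smallness in $\delta$ alone does not close the estimate; one must impose a hierarchy $\varepsilon_1\ll\sqrt{\delta_0}$ when choosing the smallness parameters, a constraint that is compatible with the continuation argument in Section \ref{sec:apriori}. With that ordering, summing the seven term-by-term estimates and choosing $\delta_0$ and then $\varepsilon_1$ sufficiently small delivers both inequalities of the lemma, with comfortable margin in the prescribed prefactors $1/32$ and $5/32$.
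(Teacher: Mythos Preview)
Your term-by-term strategy and the three basic tools are the right framework, and the estimates you sketch for $Y_3$, $\mathcal{B}_2$--$\mathcal{B}_5$ are essentially fine. The gap is in how you close $Y_4$ and $\mathcal{B}_6$.

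You correctly isolate coefficients $\varepsilon_1^2/\delta$ (on $G^S$ in the $Y_4$ bound) and $\varepsilon_1/\sqrt{\delta}$ (on $G^S$ in the $\mathcal{B}_6$ bound), and you propose to kill them by choosing $\varepsilon_1\ll\sqrt{\delta_0}$. This does not work: the constants in Proposition~\ref{apriori-estimate} are fixed once and for all, while the shock strength $\delta$ is allowed to be \emph{any} value in $(0,\delta_0)$. Your bad coefficients involve the actual $\delta$, not $\delta_0$, so for $\delta\ll\delta_0$ they blow up regardless of how $\varepsilon_1$ was chosen. The hierarchy you need would be $\varepsilon_1\ll\sqrt{\delta}$, which makes $\varepsilon_1$ depend on the particular shock and contradicts the uniform statement of the a~priori estimate.

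The standard repairs avoid the hierarchy altogether. For $\mathcal{B}_6$, after splitting $p(v)-p(\tv)=w+(u-\tu)/(2C_*)$ with $w:=p(v)-p(\tv)-(u-\tu)/(2C_*)$, the cubic $|w|^3$ piece yields $C\varepsilon_1\mathcal{G}_1$ as you say, but the cubic $|u-\tu|^3$ piece should be handled via the interpolation $\|u-\tu\|_{L^\infty}^2\le C\|u-\tu\|_{L^2}\|(u-\tu)_x\|_{L^2}\le C\varepsilon_1\mathcal{D}^{1/2}$ together with $\int|a_x|\,|u-\tu|\,dx\le C(G^S)^{1/2}$, which gives
\[
\int a_x |u-\tu|^3\,dx \le \|u-\tu\|_{L^\infty}^2\int a_x|u-\tu|\,dx \le C\varepsilon_1\,\mathcal{D}^{1/2}(G^S)^{1/2}\le C\varepsilon_1(\mathcal{D}+G^S),
\]
a bound uniform in $\delta$. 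For $Y_4$, where $\mathcal{D}$ is not available on the right, one instead feeds back the crude pointwise sizes $G^S\le\|\tv_x\|_{L^\infty}\|u-\tu\|_{L^2}^2\le C\delta^2\varepsilon_1^2$ and $\mathcal{G}_1\le C\|a_x\|_{L^\infty}\varepsilon_1^2\le C\delta^{3/2}\varepsilon_1^2$: from $|Y_4|\le C\mathcal{G}_1+CG^S/\sqrt{\delta}$ one gets
\[
\frac{|Y_4|^2}{\delta}\le C\,\frac{\mathcal{G}_1}{\delta}\,\mathcal{G}_1+C\,\frac{G^S}{\delta^2}\,G^S\le C\sqrt{\delta}\,\varepsilon_1^2\,\mathcal{G}_1+C\varepsilon_1^2\,G^S,
\]
again uniform in $\delta$. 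With these two fixes the constants in front of $\mathcal{G}_1$, $G^S$, $\mathcal{D}$ are genuine $O(\varepsilon_1)$ or $O(\sqrt{\delta_0})$ quantities, and the lemma closes as stated.
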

	\begin{proof}
	    Since the proof is essentially the same as in \cite[Section 4.5]{HKKL}, we omit the proof.
	\end{proof}

	\subsection{Proof of Lemma \ref{Main Lemma}}
	We are now ready to prove the key lemma, Lemma \ref{Main Lemma}. We combine all the estimates in \eqref{est-R} and Lemma \ref{est-R2} to derive the following control on the weighted relative entropy:
	\begin{align*}
	\frac{d}{dt} \int_\mathbb{R} a \eta (U | \tU ) \, dx &\le -\frac{\delta }{4 M} |\dot{X}|^2 -\frac{11}{16} \mathcal{G}_1   -\frac{10}{16}C_1 G^S-\frac{1}{16} \mathcal{D}.
	\end{align*}
	After integrating the above inequality on $[0,t]$ for any $t \le T$, we conclude that
	\begin{align*}
	&\int_\mathbb{R} a(t,x) \eta (U (t,x) | \tU (t,x) ) \, dx + \int_0^t \left( \delta  |\dot{X}|^2 + \mathcal{G}_1 + G^S+ \mathcal{D} \right) \, ds \\ 
	& \quad \le C  \int_\mathbb{R} a(0,x) \eta ( U_0(x)| \tU(0,x)) \, dx.
	\end{align*}
	However, since the bounds $1/2 \le a \le 2$, $D(U) \le C \mathcal{D}(U)$, $G_1(U) \le C \mathcal{G}_1(U)$, and the relation
	\begin{equation*}
	\|U-\tU\|_{L^2(\mathbb{R})}^2 \sim \int_\mathbb{R} \eta(U|\tU) \, dx, \quad \forall t \in [0,T]
	\end{equation*} 
	holds, this completes the proof of Lemma \ref{Main Lemma}.\\

	\section{Estimate on the $H^1$-perturbation}\label{sec:high-order}
	\setcounter{equation}{0}		
\begin{lemma}
There exists a constant $C>0$ such that
\begin{align}
\begin{aligned} \label{lem5.1}
&\lVert v-\widetilde{v} \rVert^2_{H^1(\mathbb{R})}+\lVert u-\widetilde{u} \rVert^2_{L^2(\mathbb{R})}+\int_{0}^{t}D_{v} d\tau+\int_{0}^{t}\left(\delta|\dot{X}|^2+G_1+G^S+D_{u_1}\right)d\tau \\ 
\le&  C\left(\lVert (v-\widetilde{v})(0, \cdot) \rVert^2_{H^1(\mathbb{R})}+\lVert (u-\widetilde{u})(0, \cdot) \rVert^2_{L^2(\mathbb{R})}\right)+C\varepsilon\int_{0}^{t}D_{u_2}d\tau.
\end{aligned}
\end{align}
\end{lemma}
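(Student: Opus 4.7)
The plan is to supplement Lemma \ref{Main Lemma} with a new estimate controlling $\|(v-\tv)_x\|_{L^2}^2$ and the dissipation $\int_0^t D_v\,d\tau$. Setting $\psi:=v-\tv$, $\phi:=u-\tu$, one derives from \eqref{eq:NS} and \eqref{viscous-shock} the perturbation equations
\[ \psi_t = \phi_x + \dot X\tv_x, \qquad \phi_t + (p(v)-p(\tv))_x = \Big(\tfrac{\phi_x}{v}\Big)_x - \Big(\tfrac{\tu_x\psi}{v\tv}\Big)_x - \sigma\dot X\,\tv_x, \]
and the key maneuver is to test the momentum perturbation equation against $-\psi_x$. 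The pressure term generates the desired coercive quantity since
$-\int (p(v)-p(\tv))_x \psi_x\,dx = \int |p'(v)|\psi_x^2\,dx - \int (p'(v)-p'(\tv))\tv_x\psi_x\,dx$, with $|p'(v)|$ bounded below by a positive constant (small-shock assumption) and the second integral absorbed by $G^S$ plus a small fraction of $D_v$ using Lemma \ref{lem:shock-property}.

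For the remaining pieces I would use two integrations by parts in time. First, $-\int \phi_t\psi_x\,dx = -\tfrac{d}{dt}\int\phi\psi_x\,dx - \int\phi_x^2\,dx + \dot X\int\phi\tv_{xx}\,dx$ after applying $\psi_{xt}=\phi_{xx}+\dot X\tv_{xx}$ and one integration by parts in $x$. Second, for the viscous term I would substitute $\phi_{xx} = \psi_{xt} - \dot X\tv_{xx}$ into the expansion $-\int (\phi_x/v)_x \psi_x\,dx = -\int \phi_{xx}\psi_x/v\,dx + \int \phi_x v_x \psi_x/v^2\,dx$, so that the $\psi_{xt}\psi_x/v$ piece produces $\tfrac12 \tfrac{d}{dt}\int \psi_x^2/v\,dx$ plus a correction $\tfrac12\int \psi_x^2 u_x/v^2\,dx$ via the identity $(1/v)_t = -u_x/v^2$ (from $v_t=u_x$). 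Consolidating time derivatives, the identity takes the shape
\[ \frac{d}{dt}\Big[\tfrac{1}{2}\int \psi_x^2/v\,dx - \int \phi\psi_x\,dx\Big] + \int |p'(v)|\psi_x^2\,dx = \int \phi_x^2\,dx + \text{(lower order)}. \]
Integration in $t$ then simultaneously produces the pointwise $\|\psi_x(t)\|_{L^2}^2$ and the time-integrated $\int_0^t D_v\,d\tau$ on the left, after absorbing the boundary cross term $|\int\phi\psi_x\,dx(t)| \le \tfrac14 \int \psi_x^2/v\,dx + C\|\phi(t)\|_{L^2}^2$ into the LHS; the remaining $\int_0^t D_{u_1}\,d\tau$ and $\|\phi(t)\|_{L^2}^2$ on the right are dominated by Lemma \ref{Main Lemma}. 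Shift errors of the form $\dot X \int \phi\tv_{xx}\,dx$ and $\dot X \int \tv_{xx}\psi_x/v\,dx$ are harmless because $|\tv_{xx}|\le C\delta|\tv_x|$ by Lemma \ref{lem:shock-property}, and Young's inequality trades them for a small multiple of $\delta|\dot X|^2 + G^S$.

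The main obstacle lies in the lower-order cubic nonlinearities, in particular $\int \phi_x \psi_x^2/v^2\,dx$ coming both from $\int \phi_x v_x \psi_x/v^2\,dx$ (after splitting $v_x = \tv_x + \psi_x$) and from the $\psi_x^2 u_x/v^2$ correction (after splitting $u_x = \phi_x + \tu_x$). No sign is available, and these terms necessarily involve $\phi_{xx}$ through the Sobolev embedding $\|\phi_x\|_{L^\infty} \lesssim \|\phi_x\|^{1/2}_{L^2}\|\phi_{xx}\|^{1/2}_{L^2}$. Combined with the a priori smallness $\|\psi_x\|_{L^2}\le \varepsilon$, a three-factor Young's inequality of the form
\[ \|\phi_x\|^{1/2}_{L^2}\|\phi_{xx}\|^{1/2}_{L^2}\|\psi_x\|_{L^2}^{2}\le C\varepsilon\,\|\phi_{xx}\|^2_{L^2} + C\varepsilon\,\|\phi_x\|^2_{L^2} + C\varepsilon\,\|\psi_x\|^2_{L^2} \]
produces exactly the $C\varepsilon \int_0^t D_{u_2}\,d\tau$ remnant in \eqref{lem5.1}, together with $C\varepsilon$-multiples of $\int_0^t D_v\,d\tau$ and $\int_0^t D_{u_1}\,d\tau$ that are absorbed into the LHS or controlled by Lemma \ref{Main Lemma}. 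Without introducing the effective velocity $h$, this $D_{u_2}$ remnant is unavoidable at this stage, but it is designed to be absorbed in the forthcoming $H^1$-estimate on $u$ in the remainder of Section \ref{sec:high-order}.
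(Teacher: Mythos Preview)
Your proposal is correct and follows essentially the same strategy as the paper: both test the momentum perturbation against (a weight times) $(v-\tv)_x$, use the differentiated mass equation to neutralize the top-order viscous contribution $\phi_{xx}\psi_x$, extract the coercive pressure term $|p'(v)|\psi_x^2$, and handle the cubic remainders via Sobolev embedding together with the smallness $\|\psi_x\|_{L^2}\le\varepsilon$, leaving exactly the $C\varepsilon\int_0^t D_{u_2}\,d\tau$ residual before combining with Lemma~\ref{Main Lemma}. The only cosmetic difference is the choice of weight: the paper multiplies the momentum equation by $-v\psi_x$ (so the functional is $\tfrac12\|\psi_x\|^2-\int v\psi_x\phi\,dx$ and the $\phi_{xx}\psi_x$ terms cancel directly between \eqref{5.2} and \eqref{5.3}), whereas you multiply by $-\psi_x$ and instead substitute $\phi_{xx}=\psi_{xt}-\dot X\tv_{xx}$ into the viscous term to produce $\tfrac12\tfrac{d}{dt}\int\psi_x^2/v\,dx$; since $v$ is bounded above and below these two organizations are algebraically equivalent.
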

\begin{proof}
Recall
\begin{equation} \label{oeh0}
\begin{cases}
(v-\widetilde{v})_t=(u-\widetilde{u})_x+\dot{X}(t)\widetilde{v}_x & \\
(u-\widetilde{u})_t=-(p(v)-p(\widetilde{v}))_x+\left(\frac{u_x}{v}-\frac{\widetilde{u}_x}{\widetilde{v}}\right)_x+\dot{X}(t)\widetilde{u}_x. & 
\end{cases} 
\end{equation}
We differentiate \eqref{oeh0}$_1$ with respect to $x$ and multiply the result  by $(v - \widetilde{v})_x$ to obtain
\begin{align}
\begin{aligned} \label{5.2}
\frac{d}{dt}\frac{|(v-\widetilde{v})_x|^2}{2} &=(u-\widetilde{u})_{xx}(v-\widetilde{v})_x+ \dot{X}(t)\widetilde{v}_{xx}(v-\widetilde{v})_{x} .
\end{aligned}    
\end{align}
On the other hand, we multiply \eqref{oeh0}$_2$  by $-v(v - \widetilde{v})_x$ and use the following identity
\begin{equation} \label{p-tp eq}
(p(v)-p(\widetilde{v}))_x=p'(v)(v-\widetilde{v})_x-\widetilde{v}_x(p'(v)-p'(\widetilde{v}))   
\end{equation}
to get
\begin{align}
\begin{aligned} \label{5.3}
(-v(v-\widetilde{v})_x(u-\widetilde{u}))_t=&-\left(v(v-\widetilde{v})_t(u-\widetilde{u})\right)_x-v_t(v-\widetilde{v})_x(u-\widetilde{u})\\
&+v_x(v-\widetilde{v})_t(u-\widetilde{u})+v(v-\widetilde{v})_t(u-\widetilde{u})_x\\
&+vp'(v)|(v-\widetilde{v})_x|^2-v\widetilde{v}_x(v-\widetilde{v})_x\left(p'(v)-p'(\widetilde{v})\right)\\
&-v(v-\widetilde{v})_x\left(\frac{u_x}{v}-\frac{\widetilde{u}_x}{\widetilde{v}}\right)_x-v(v-\widetilde{v})_x\dot{X}\widetilde{u}_x.
\end{aligned}
\end{align}

Now, we combining \eqref{5.2} and \eqref{5.3} to have
\begin{equation} \label{5result1}
    \begin{aligned}
        &\frac{d}{dt}\left(\frac{|(v-\widetilde{v})_x|^2}{2}-v(v-\widetilde{v})_x(u-\widetilde{u})\right)+\left(v(v-\widetilde{v})_x(u-\widetilde{u})\right)_x-vp'(v)|(v-\widetilde{v})_x|^2 \\ =&\dot{X}(v-\widetilde{v})_x\left(\widetilde{v}_{xx}-v\widetilde{u}_x\right)-v_t(v-\widetilde{v})_x(u-\widetilde{u})+v_x(v-\widetilde{v})_t(u-\widetilde{u})\\
&+v(v-\widetilde{v})_t(u-\widetilde{u})_x-v\widetilde{v}_x(v-\widetilde{v})_x(p'(v)-p'(\widetilde{v}))+\frac{v_x(v-\widetilde{v})_x(u-\widetilde{u})_x}{v}\\
&+\frac{\widetilde{u}_{xx}(v-\widetilde{v})(v-\widetilde{v})_x}{\widetilde{v}}+v\widetilde{u}_x(v-\widetilde{v})_x\left(\frac{v_x}{v^2}-\frac{\widetilde{v}_x}{\widetilde{v}^2}\right),
    \end{aligned}
\end{equation}
where we used 
\[\left(\frac{u_x}{v}-\frac{\widetilde{u}_x}{\widetilde{v}}\right)_x=\frac{(u-\widetilde{u})_{xx}}{v}-\frac{(u-\widetilde{u})_xv_x}{v^2}+\widetilde{u}_{xx}\left(\frac{\widetilde{v}-v}{v\widetilde{v}}\right)-\widetilde{u}_x\left(\frac{v_x}{v^2}-\frac{\widetilde{v}_x}{\widetilde{v}^2}\right).\]

Integrating the equation \eqref{5result1} over $\left[0,t\right]\times \mathbb{R}$ with respect to $t$ and $x$, we have
\begin{align}
\begin{aligned} \label{5.4}
&\frac{\lVert (v-\widetilde{v})_x(t,\cdot) \rVert^2}{2}-\frac{\lVert (v-\widetilde{v})_x(0,\cdot) \rVert^2}{2}+\int_{0}^t\int_{\mathbb{R}}-vp'(v)|(v-\widetilde{v})_x|^2dxd\tau \\ 
=&\int_{\mathbb{R}}(v(v-\widetilde{v})_x(u-\widetilde{u}))(t,\cdot)dx-\int_{\mathbb{R}}(v(v-\widetilde{v})_x(u-\widetilde{u}))(0,\cdot)dx+\sum\limits_{i=1}^8\mathcal{L}_i, 
\end{aligned}
\end{align}
where 
\begin{align*} 
\mathcal{L}_1&=\int_{0}^{t}\int_{\mathbb{R}}\dot{X}(v-\widetilde{v})_x(\widetilde{v}_{xx}-v\widetilde{u}_x)dxd\tau, \ \mathcal{L}_2=-\int_{0}^{t}\int_{\mathbb{R}}v_t(v-\widetilde{v})_{x}(u-\widetilde{u})dxd\tau, \\
\mathcal{L}_3&=\int_{0}^{t}\int_{\mathbb{R}}v_x(v-\widetilde{v})_{t}(u-\widetilde{u})dxd\tau, \ \qquad  \quad \mathcal{L}_4=\int_{0}^{t}\int_{\mathbb{R}}v(v-\widetilde{v})_{t}\left(u-\widetilde{u}\right)_xdxd\tau, \\
\mathcal{L}_5&=-\int_{0}^{t}\int_{\mathbb{R}}v\widetilde{v}_x(v-\widetilde{v})_{x}\left(p'(v)-p'(\widetilde{v})\right)dxd\tau, \
\mathcal{L}_6=\int_{0}^{t}\int_{\mathbb{R}}\frac{v_x(v-\widetilde{v})_x(u-\widetilde{u})_x}{v}dxd\tau, \\
\mathcal{L}_7&=\int_{0}^{t}\int_{\mathbb{R}}\frac{\widetilde{u}_{xx}(v-\widetilde{v})(v-\widetilde{v})_x}{\widetilde{v}}dxd\tau, \
\quad \quad \mathcal{L}_8=\int_{0}^{t}\int_{\mathbb{R}}v\widetilde{u}_x(v-\widetilde{v})_x\left(\frac{v_x}{v^2}-\frac{\widetilde{v}_x}{\widetilde{v}^2}\right)dxd\tau.
\end{align*}  

In the above equation \eqref{5.4}, the first and second terms on the right-hand side can be estimated as follows.
\begin{align*}
&\int_{\mathbb{R}}(v(v-\widetilde{v})_x(u-\widetilde{u}))(t,\cdot)dx-\int_{\mathbb{R}}(v(v-\widetilde{v})_x(u-\widetilde{u}))(0,\cdot)dx\\
\le & C\left(\lVert (v-\widetilde{v})_x(t,\cdot) \rVert_{L^2}\lVert (u-\widetilde{u})(t,\cdot) \rVert_{L^2}+\lVert (v-\widetilde{v})_x(0,\cdot) \rVert_{L^2}\lVert (u-\widetilde{u})(0,\cdot) \rVert_{L^2}\right)\\
\le & \frac{1}{4}\lVert v|p'(v)|(v-\widetilde{v})_x(t,\cdot) \rVert^2_{L^2}+C\lVert (u-\widetilde{u})(t,\cdot) \rVert^2_{L^2}+C\lVert (v-\widetilde{v})_x(0,\cdot) \rVert^2_{L^2}+C\lVert (u-\widetilde{u})(0,\cdot) \rVert^2_{L^2}.
\end{align*}
Therefore, we can rewrite \eqref{5.4} as
\begin{align}
\begin{aligned} \label{5.7}
\lVert (v-\widetilde{v})_x\rVert^2_{L^2}+\int_{0}^{t}\lVert (v-\widetilde{v})_x \rVert^2_{L^2} d\tau \le& C\left(\lVert (v-\widetilde{v})_x(0,\cdot) \rVert^2_{L^2}+\lVert (u-\widetilde{u})(0,\cdot) \rVert^2_{L^2}\right)\\
&+C\lVert u-\widetilde{u}\rVert^2_{L^2}+C\sum\limits_{i=1}^8|\mathcal{L}_i|.  
\end{aligned}
\end{align}
As in Section 4, we can easily get the estimates of $L_i$'s as
\begin{align*}
\sum\limits_{i=1}^8|\mathcal{L}_i|\le \frac{3}{4}\int_{0}^{t}D_vd\tau+C\int_{0}^{t}D_{u_1}d\tau+C(\delta+\varepsilon)\int_{0}^{t}\left(\delta|\dot{X}|^2+G_1+G^{S}\right)+C\varepsilon\int_{0}^{t}D_{u_2}d\tau.
\end{align*}
Therefore, \eqref{5.7} yields the inequality below with some constant $C_2>0$
\begin{align}
\begin{aligned} \label{finequ2.}
&\lVert (v-\widetilde{v})_x\rVert^2_{L^2}+\int_{0}^{t}D_{v}  d\tau \\
\le& C\left(\lVert (v-\widetilde{v})_x(0,\cdot) \rVert_{L^2}^2+\lVert u_0-\widetilde{u}(0,\cdot) \rVert_{L^2}^2\right)+C_2\left(\lVert u-\widetilde{u}\rVert_{L^2}^2+\int_{0}^{t}D_{u_1}d\tau\right)\\
&+C(\varepsilon+\delta)\int_{0}^{t}\left(\delta|\dot{X}|^2+G_1+G^S\right)d\tau+C\varepsilon\int_{0}^{t}D_{u_2}d\tau. 
\end{aligned}    
\end{align}
Multiply \eqref{finequ2.} by $\frac{1}{\max \left(2, C_2\right)}$ and add to \eqref{Main Lemma}. Then the resultant inequality yields \eqref{lem5.1}.
\end{proof}

\begin{lemma} 
There exists a constant $C>0$ such that
\begin{align} 
\begin{aligned} \label{lem 5.2}
&\lVert v-\widetilde{v} \rVert^2_{H^1(\mathbb{R})}+\lVert u-\widetilde{u} \rVert^2_{H^1(\mathbb{R})}+\int_{0}^{t}\left(\delta|\dot{X}|
^2+{G}_1+G^S+D_{v}+D_{u_1}+D_{u_2}\right)d\tau \\  
\le &  C\left(\lVert (v-\widetilde{v})(0, \cdot) \rVert^2_{H^1(\mathbb{R})}+\lVert (u-\widetilde{u})(0, \cdot) \rVert^2_{H^1(\mathbb{R})}\right).
\end{aligned}
\end{align} 
\end{lemma}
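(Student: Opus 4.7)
The plan is to close the a priori estimate by producing the missing $\int_0^t D_{u_2}\,d\tau$ on the left-hand side through a higher-order energy identity, then combining with \eqref{lem5.1}. Specifically, I would multiply the momentum equation in \eqref{oeh0} by $-(u-\widetilde{u})_{xx}$ and integrate over $\mathbb{R}$. After integration by parts in $x$, the transient contribution becomes $\frac{d}{dt}\frac{1}{2}\|(u-\widetilde{u})_x\|_{L^2}^2$. Expanding the nonlinear diffusion $\left(\frac{u_x}{v}-\frac{\widetilde{u}_x}{\widetilde{v}}\right)_x$ as in the decomposition already used in \eqref{5result1}, the principal contribution will be $-\int \frac{|(u-\widetilde{u})_{xx}|^2}{v}\,dx$, which is comparable to $-c_0 D_{u_2}$ for some $c_0>0$ since $v$ is uniformly bounded away from $0$ and $\infty$ under the a priori assumption \eqref{smallness}.

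The remaining terms on the right-hand side split into three groups. (i) The pressure term $\int (p(v)-p(\widetilde{v}))_x (u-\widetilde{u})_{xx}\,dx$ is controlled by rewriting $(p(v)-p(\widetilde{v}))_x$ via \eqref{p-tp eq} and applying Young's inequality, yielding a bound of the form $\eta D_{u_2}+C(D_v+G^S)$ for arbitrary small $\eta>0$. (ii) The commutator terms produced by expanding the diffusion, namely $\int\frac{v_x(u-\widetilde{u})_x}{v^2}(u-\widetilde{u})_{xx}\,dx$, $\int \widetilde{u}_{xx}\frac{\widetilde{v}-v}{v\widetilde{v}}(u-\widetilde{u})_{xx}\,dx$, and $\int \widetilde{u}_x\bigl(\frac{v_x}{v^2}-\frac{\widetilde{v}_x}{\widetilde{v}^2}\bigr)(u-\widetilde{u})_{xx}\,dx$, are treated using \eqref{shock-property} on the shock-profile derivatives together with Young's inequality. (iii) The shift contribution $\dot X\int\widetilde{u}_x(u-\widetilde{u})_{xx}\,dx$, after one integration by parts, becomes $-\dot X\int \widetilde{u}_{xx}(u-\widetilde{u})_x\,dx$, which is absorbed using $|\widetilde{u}_{xx}|\le C\delta|\widetilde{v}_x|$ from \eqref{shock-property} into a piece of $C\delta^2|\dot X|^2+\eta D_{u_1}$.

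The main obstacle will be the cubic commutator term in (ii), $\int\frac{v_x(u-\widetilde{u})_x(u-\widetilde{u})_{xx}}{v^2}\,dx$, which carries two perturbation factors at top derivative order. The plan is to split $v_x=(v-\widetilde{v})_x+\widetilde{v}_x$ and exploit the Sobolev interpolation $\|(u-\widetilde{u})_x\|_{L^\infty}\le C\|(u-\widetilde{u})_x\|_{L^2}^{1/2}\|(u-\widetilde{u})_{xx}\|_{L^2}^{1/2}$ together with the smallness $\|(v-\widetilde{v})_x\|_{L^2}\le \varepsilon$ guaranteed by \eqref{smallness} and $\|\widetilde{v}_x\|_{L^\infty}\le C\delta^2$ from \eqref{shock-property}. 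After Young's inequality, this produces a bound of the form $\eta D_{u_2}+C(\varepsilon+\delta)(D_{u_1}+D_v)$, with $\eta$ at our disposal; the remaining commutator terms in (ii) are analogous but easier, giving at worst $C(\varepsilon+\delta)(D_v+D_{u_1}+G^S)$.

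Collecting everything, integrating in time on $[0,t]$, and choosing $\eta$ small yields an inequality of the form
\begin{align*}
\|(u-\widetilde{u})_x\|_{L^2}^2 + c_0\!\int_0^t\! D_{u_2}\,d\tau \le C\|(u_0-\widetilde{u}(0,\cdot))_x\|_{L^2}^2 + C\!\int_0^t\!\bigl(D_v+D_{u_1}+G^S+G_1+\delta|\dot X|^2\bigr)\,d\tau.
\end{align*}
Adding a sufficiently small multiple of this inequality to \eqref{lem5.1} and choosing $\varepsilon$ small enough, the term $C\varepsilon\int_0^t D_{u_2}\,d\tau$ on the right of \eqref{lem5.1} is absorbed into $c_0\int_0^t D_{u_2}\,d\tau$, while the extra dissipations $D_v,D_{u_1},G^S,G_1,\delta|\dot X|^2$ on the right are already controlled by the left-hand side of \eqref{lem5.1} up to a small factor. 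This produces \eqref{lem 5.2} and thereby completes the proof of Proposition \ref{apriori-estimate}.
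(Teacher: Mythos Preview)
Your proposal is correct and follows essentially the same approach as the paper: multiply \eqref{oeh0}$_2$ by $-(u-\widetilde{u})_{xx}$, extract the coercive term $-\int\frac{|(u-\widetilde{u})_{xx}|^2}{v}\,dx$ from the expanded diffusion, bound the pressure, commutator, and shift remainders by Young's inequality and \eqref{shock-property}, and close by a weighted combination with \eqref{lem5.1}. The only cosmetic difference is that the paper groups the right-hand side as $\mathcal{K}_1+\mathcal{K}_2+\mathcal{K}_3$ and states the resulting bound in one line, whereas you spell out the commutator analysis (in particular the cubic term) more explicitly; your treatment of the shift term via one integration by parts is a harmless variant of the paper's direct Young estimate.
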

\begin{proof}

Multiplying \eqref{oeh0}$_2$ by $-(u-\widetilde{u})_{xx}$ and integrating the result over $\left[0, t\right]\times \mathbb{R}$, we get
\begin{equation}\label{5result21}
\begin{aligned}
&-\int_{0}^{t}\int_{\mathbb{R}}(u-\widetilde{u})_\tau(u-\widetilde{u})_{xx}dxd\tau \\
=&\int_{0}^{t}\int_{\mathbb{R}} (p(v)-p(\widetilde{v}))_x(u-\widetilde{u})_{xx} dxd\tau-\int_{0}^{t}\int_{\mathbb{R}}\left(\frac{u_x}{v}-\frac{\widetilde{u}_x}{\widetilde{v}}\right)_x(u-\widetilde{u})_{xx}dxd\tau-\int_{0}^{t}\int_{\mathbb{R}}\dot{X}\widetilde{u}_x(u-\widetilde{u})_{xx}dxd\tau \\ 
=&:\mathcal{K}_1+\mathcal{K}_2+\mathcal{K}_3.
\end{aligned}
\end{equation}
Applying the integration by parts to the left-hand side above equation, we have
\begin{equation}
    \begin{aligned} \label{5result2}
        -\int_{0}^{t}\int_{\mathbb{R}}(u-\widetilde{u})_\tau(u-\widetilde{u})_{xx}dxd\tau&=\int_{0}^{t}\int_{\mathbb{R}}\frac{d}{dt}\frac{|(u-\widetilde{u})_x|^2}{2}dxd\tau \\
&=\frac{1}{2}\lVert (u-\widetilde{u})_x(t,\cdot) \rVert^2_{L^2(\mathbb{R})}-\frac{1}{2}\lVert (u-\widetilde{u})_x(0,\cdot) \rVert^2_{L^2(\mathbb{R})}.
    \end{aligned}
\end{equation}

Then, substituting \eqref{5result2} into equation \eqref{5result21} above, we can rewrite that as 
\begin{equation} \label{24}
\frac{1}{2}\lVert (u-\widetilde{u})_x(t,\cdot) \rVert^2_{L^2(\mathbb{R})}=\frac{1}{2}\lVert (u-\widetilde{u})_x(0,\cdot) \rVert^2_{L^2(\mathbb{R})}+\mathcal{K}_1+\mathcal{K}_2+\mathcal{K}_3.
\end{equation}
As in Section 4, we can easily get 
\begin{equation*}
\mathcal{K}_1+\mathcal{K}_2+\mathcal{K}_3\le -\frac{1}{2}\int_{0}^{t} \lVert \frac{(u-\widetilde{u})_{xx}}{v} \rVert_{L^2(\mathbb{R})}^2 d\tau+C\int_{0}^{t}D_vd\tau+C\left(\delta+\varepsilon\right)\int_{0}^{t}\left(\delta|\dot{X}|^2+G_1+G^S+D_{u_1}\right)d\tau.
\end{equation*}
Therefore, combining these and \eqref{lem5.1} as in Lemma 5.1 with the fact that $D_{u_2}\sim \int_{0}^{t} \lVert \frac{(u-\widetilde{u})_{xx}}{v} \rVert_{L^2(\mathbb{R})}^2 d\tau$, we can get the desired estimate \eqref{lem 5.2}.
\end{proof}

\begin{appendix}
    \section{Proof of Theorem \ref{thm:main} }
    \label{Diffusion proof}
		\setcounter{equation}{0}
  \subsection{Global existence of perturbed solution}
	\setcounter{equation}{0}
	Using the a priori estimate \eqref{a-priori-1}, we can extend the local solution obtained from Proposition \ref{prop:local} to the global one by using the standard continuation argument. We first choose smooth functions $\underline{v}$ and $\underline{u}$ that satisfy
	\begin{equation}\label{ubarvbar}
	\sum_{\pm}\left(\|\underline{v}-v_\pm\|_{L^2(\R_\pm)}+\|\underline{u}-u_\pm\|_{L^2(\R_\pm)}\right)+\|\pa_x\underline{v}\|_{L^2(\R)}+\|\pa_x\underline{u}\|_{L^2(\R)}\le C\delta.
	\end{equation}
	
	Then, we use the estimates on the shock wave \eqref{shock-property} to obtain
	\begin{equation}
	\begin{aligned}\label{est-init}
	&\norm{\underline{v}(\cdot)-\tv(0,\cdot)}_{H^1(\mathbb{R})}+\norm{\underline{u}(\cdot)-\tu(0,\cdot)}_{H^1(\mathbb{R})} \\
 &\le \sum_{\pm}\left(\norm{\underline{v}-v_{\pm}}_{L^2(\R_\pm)}+\norm{\underline{u}-u_{\pm}}_{L^2(\R_\pm)}\right)  + \|\tv-v_+\|_{L^2(\R_+)}+\|\tv-v_-\|_{L^2(\R_-)} \\
 &\quad+\|\pa_x \underline{v}\|_{L^2(\mathbb{R})}+ \|\partial_x\tv\|_{L^2(\mathbb{R})} +\|\tu-u_+\|_{L^2(\R_+)}+\|\tu-u_-\|_{L^2(\R_-)}+ \|\pa_x \underline{u}\|_{L^2(\mathbb{R})}+\|\partial_x\tu\|_{L^2(\mathbb{R})}\\
	&\le C\sqrt{\delta} 
	\end{aligned}
	\end{equation}
	Now, for sufficiently small $\delta$ we choose $\e_0$ as 
	\[\e_0 < \frac{\e_1}{3}-C\sqrt{\delta}.\]
	Consider any initial data $(v_0,u_0)$ such that
	\[\sum_{\pm}\left(\|v_0-v_{\pm}\|_{L^2(\R_\pm)}+\|u_0-u_\pm\|_{L^2(\R_\pm)}\right)+\|\partial_x v_{0}\|_{L^2(\mathbb{R})}+\|\partial_x u_{0}\|_{L^2(\mathbb{R})}<\e_0.\]
	Then, we use \eqref{ubarvbar} to obtain
	\begin{align*}
	&\|v_0-\underline{v}\|_{H^1(\mathbb{R})}+\|u_0-\underline{u}\|_{H^1(\mathbb{R})}\\
	&\le\sum_{\pm}\left(\|v_0-v_\pm\|_{L^2(\R_\pm)}+\|u_0-u_{\pm}\|_{L^2(\R_\pm)}+\|\underline{v}-v_\pm\|_{L^2(\R_\pm)}+\|\underline{u}-u_{\pm}\|_{L^2(\R_\pm)}\right)\\
	&\quad + \|\partial_x v_{0}\|_{L^2(\mathbb{R})}+\|\partial_x u_{0}\|_{L^2(\mathbb{R})}+\|\partial_x \underline{v}\|_{L^2(\mathbb{R})}+\|\partial_x \underline{u}\|_{L^2(\mathbb{R})}\\
	&\le \e_0 +C\sqrt{\delta}< \frac{\e_1}{3}.
	\end{align*}
	From the smallness of $\e_1$ and Sobolev embedding, we have
	\[\frac{v_-}{2}\le v_0(x)\le 2v_+,\quad x\in\R\]
	and by the local existence result in Proposition \ref{prop:local}, there exists $T_0>0$ such that
	\begin{equation}\label{est-local-1}
	\|v-\underline{v}\|_{L^\infty(0,T_0;H^1(\mathbb{R}))}+\|u-\underline{u}\|_{L^\infty(0,T_0;H^1(\mathbb{R}))}\le \frac{\e_1}{2}
	\end{equation}
	and
	\[\frac{v_-}{3}\le v(t,x)\le 3v_+,\quad (t,x)\in[0,T_0]\times\R.\]
	On the other hand, we estimate the difference between $(\underline{v},\underline{u})$ and $(\tv^X,\tu^X)$ by using similar estimate as in \eqref{est-init} and \eqref{shock-property} as 
	\begin{align*}
	&\|\underline{v}-\tv^X(t,\cdot)\|_{H^1(\mathbb{R})}+\|\underline{u}-\tu^X(t,\cdot)\|_{H^1(\mathbb{R})}\\
	&\le \sum_{\pm}\left(\|\underline{v}-v_{\pm}\|_{L^2(\R_\pm)}+\|\underline{u}-u_{\pm}\|_{L^2(\R_\pm)}+\|\tv^X-v_\pm\|_{L^2(\R_\pm)}+\|\tu^X-u_\pm\|_{L^2(\R_\pm)}\right)\\
	&\quad +\|\partial_x \tv^X\|_{L^2(\mathbb{R})}+\|\partial_x \tu^X\|_{L^2(\mathbb{R})}+\|\partial_x \underline{v}\|_{L^2(\mathbb{R})}+\|\underline{u}\|_{L^2(\mathbb{R})}\\
	&\le C\sqrt{\delta}(1+\sqrt{|X(t)|})\le C\sqrt{\delta}(1+\sqrt{t}).
	\end{align*}
	Taking $T_1\in(0,T_0)$ small enough so that $C\sqrt{\delta}(1+\sqrt{T_1})\le \frac{\e_1}{2}$, we have
	\begin{equation}\label{est-local-2}
	\|\underline{v}-\tv^X\|_{L^\infty(0,T_1;H^1(\mathbb{R}))}+\|\underline{u}-\tu^X\|_{L^\infty(0,T_1;H^1(\mathbb{R}))}\le\frac{\e_1}{2}.
	\end{equation}
	Combining \eqref{est-local-1} and \eqref{est-local-2} yields
	\[\|v-\tv^X\|_{L^\infty(0,T_1;H^1(\mathbb{R}))}+\|u-\tu^X\|_{L^\infty(0,T_1;H^1(\mathbb{R}))}\le\e_1.\]
	Then, the a priori estimate \eqref{a-priori-1} implies that $T_1$ can be extended to $+\infty$, and the global existence is proved. In particular, we have
	\begin{equation}
	\begin{aligned}\label{est-infinite}
	& \sup_{t>0}\left(\norm{v-\tv^X}_{H^1 (\mathbb{R})}^2 +\norm{u-\tu^X}_{H^1(\mathbb{R})}^2\right)+\delta \int_0^\infty | \dot{X}(t)|^2 \, d t \\ 
	&\quad +\int_0^\infty \left( G_1+G^S \right) \, dt+  \int_0^\infty \left( D_v+D_{u_1} + D_{u_2}  \right)\, dt  \\ 
	& \le C_0 \left(\norm{v_0-\tv}_{H^1 (\mathbb{R})}^2 +\norm{u_0-\tu}_{H^1(\mathbb{R})}^2\right)<\infty
	\end{aligned}
	\end{equation}
	and, for $t>0$,
	\begin{equation} \label{eq: X bound}
	|\dot{X}(t)|\le C_0 \left(\|(v-\tv^X)(t,\cdot)\|_{L^\infty(\bbr)}+\|(u-\tu^X)(t,\cdot)\|_{L^\infty(\bbr)}\right).
	\end{equation}
	
	\subsection{Time-asymptotic behavior}
	We are now ready to prove the time-asymptotic behavior of the perturbation. We first define
	\[g(t):=\|(v-\tv^X)_x\|_{L^2(\mathbb{R})}^2+\|(u-\tu^X)_x\|_{L^2(\mathbb{R})}^2.\]
	We will show that $g\in W^{1,1}(\R_+)$ which implies $\lim_{t\to\infty}g(t)=0$. Then, the Gagliardo-Nirenberg interpolation inequality and the uniform bound estimate \eqref{est-infinite} implies
	\begin{equation} \label{eq: v,u limit}
	\lim_{t\to\infty}\left(\|v-\tv^X\|_{L^\infty(\mathbb{R})}+\|u-\tu^X\|_{L^\infty(\mathbb{R})}\right)=0.
	\end{equation}
	Furthermore, \eqref{eq: X bound} and \eqref{eq: v,u limit} imply that
	\[ \lim_{t \to \infty} |\dot{X}(t)| \le C_0 \lim_{t \to \infty} \left( \norm{(v-\tv^X)(t,\cdot)}_{L^\infty(\mathbb{R})} + \norm{(u-\tu^X)(t,\cdot)}_{L^\infty(\mathbb{R})} \right)=0.\]
	Therefore, it remains to show that $g\in W^{1,1}(\R_+)$.\\
	
	\noindent (1) $g\in L^1(\bbr_+)$: By using \eqref{est-infinite} in the final inequality, we obtain

	\begin{align*}
	\int_0^\infty |g(t)|\,dt&=\int_0^\infty\int_{\R} |(v-\tv^X)_x|^2+|(u-\tu^X)_x|^2\,dxdt\le \int_0^\infty (D_v+D_{u_1})\,d t<\infty,
	\end{align*}
 which implies $g\in L^1(\R_+)$.
	
	\noindent (2) $g'\in L^1(\R_+)$: We combine the system \eqref{eq:NS} and \eqref{viscous-shock} to obtain
	\begin{align}
	\begin{aligned}\label{eq:diff}
	&(v-\tv^X)_t - (u-\tu^X)_x = \dot{X}(t)\tv_x^X,\\
	&(u-\tu^X)_t + (p(v)-p(\tv^X))_x = \left(\frac{u_x}{v}-\frac{\tu_x^X}{\tv^X}\right)_x+\dot{X}(t)\tu_x^X.
	\end{aligned}
	\end{align}
 In addition, we observe that
    \begin{align}
	\begin{aligned} \label{diffusion p}
	|(p(v)-p(\tv^X))_x| &\le |p'(v) | |(v-\tv^X)_x| + |\tv^X_x| |p'(v)-p'(\tv^X)| \le C |(v-\tv^X)_x| + |\tv^X_x| |v-\tv^X| \\ 
 &\le C |(v-\tv^X)_x| + |\tv^X_x| |p(v)-p(\tv^X )|\\ 
 &\le  C |(v-\tv^X)_x| + |\tv^X_x| |p(v)-p(\tv^X ) -\frac{u-\tu^X}{2C_*}| + C|\tv^X_x| |u-\tu^X|
	\end{aligned}
	\end{align}
	Then, using the equation \eqref{eq:diff} and \eqref{diffusion p}, we estimate the time-integration of $g'$ as
	\begin{equation}
	\begin{aligned}\label{est:gprime}
	\int_0^\infty|g'(t)|\,dt&=\int_0^\infty 2\left|\int_\R (v-\tv^X)_x(v-\tv^X)_{xt}\,dx+\int_\R (u-\tu^X)_x(u-\tu^X)_{xt}\,dx\right|\,dt\\
	&\le 2\int_0^\infty \left|\int_\R (v-\tv^X)_x\left((u-\tu^X)_{xx}+\dot{X}(t)\tv^X_{xx}\right)\,dx\right|\,dt\\
	&\quad + 2\int_0^\infty \Bigg|\int_\R (u-\tu^X)_x \left(-(p(v)-p(\tv^X))_{xx}+\left(\frac{u_x}{v}-\frac{\tu_x^X}{\tv^X}\right)_{xx}+\dot{X}(t)\tu_{xx}^X\right)\Bigg|\,dt\\
	&=2\int_0^\infty \left|\int_\R (v-\tv^X)_x\left((u-\tu^X)_{xx}+\dot{X}(t)\tv^X_{xx}\right)\,dx\right|\,dt\\
	&\quad + 2\int_0^\infty \Bigg|\int_\R (u-\tu^X)_{xx}\left((p(v)-p(\tv^X))_{x}-\left(\frac{u_x}{v}-\frac{\tu_x^X}{\tv^X}\right)_{x}\right) + \int_\R (u-\tu^X)_x\dot{X}(t)\tu^X_{xx}\Bigg|\,dt\\
	&\le C\int_0^\infty (D_{v}+D_{u_1}+D_{u_2}+|\dot{X}(t)|^2)\,dt + C\int_0^\infty \int_\R \left|\left(\frac{u_x}{v}-\frac{\tu_x^X}{\tv^X}\right)_{x}\right|^2 \,dxdt.
	\end{aligned}
	\end{equation}
	
	Since the first term in the right-hand side of \eqref{est:gprime} can be bounded by \eqref{est-infinite}, we only need to estimate the last term.   Precisely, we obtain
	\begin{align*}
	\int_0^\infty& \int_\R \left|\left(\frac{u_x}{v}-\frac{\tu^X_x}{\tv^X}\right)_x\right|^2\,dxdt\\
	&=\int_0^\infty\int_\R \Bigg|\frac{1}{v}(u-\tu^X)_{xx}+\tu^X_{xx}\left(\frac{1}{v}-\frac{1}{\tv^X}\right)-\frac{1}{v^2}(v-\tv^X)_{x}(u-\tu^X)_x\\
	&\hspace{2cm}-\frac{\tv^X_x}{v^2}(u-\tu^X)_x-\frac{\tu^X_x}{v^2}(v-\tv^X)_x-\tv^X_x\tu^X_x\left(\frac{1}{v^2}-\frac{1}{(\tv^X)^2}\right)\Bigg|^2\,dxdt\\
	&\le C\int_0^\infty\int_\R \Bigg(|(u-\tu^X)_{xx}|^2 + |\tu_x^X|^2|v-\tv^X|^2+|(u-\tu^X)_x|^2|(v-\tv^X)_x|^2\\
	&\hspace{3cm}+|\tv_x^X|^2|(u-\tu^X)_x|^2+|\tu_x^X|^2|(v-\tv^X)_x|^2+|\tv^X_x|^2|\tu^X_x|^2|v-\tv^X|^2\Bigg)\,dxdt,
	\end{align*}
	and consequently,
	\begin{align*}
	\int_0^\infty& \int_\R \left|\left(\frac{u_x}{v}-\frac{\tu^X_x}{\tv^X}\right)_x\right|^2\,dxdt\\
	&\le C\int_0^\infty\left(G_1+G^S+D_v+D_{u_1}+D_{u_2}\right)\,dt\\
	&\quad + C\|(v-\tv^X)_x\|^2_{L^\infty((0,\infty)\times\R)}\int_0^\infty\int_\R |(u-\tu^X)_x|^2\,dxdt\\
	&\le C\int_0^\infty\left(G_1+G^S+D_v+D_{u_1}+D_{u_2}\right)\,dt+ C\int_0^\infty D_{u_1}\,dt<+\infty.
	\end{align*}

	This proves $g'\in L^1(\R_+)$. Thus, we have shown that $g\in W^{1,1}(\R_+)$. This completes the proof of the asymptotic behavior of the NS equations. Therefore, once we have the a priori estimate in Proposition \ref{apriori-estimate}, we prove the time asymptotic behavior of the NS equations. \\
\end{appendix}
\bibliographystyle{amsplain}
\bibliography{reference} 
\end{document}